\documentclass[11pt]{article}

\usepackage[colorlinks=true,linkcolor=blue,citecolor=blue]{hyperref}
\usepackage{subfigure}
\usepackage{setspace}
\usepackage{titletoc}
\usepackage{titlesec}
\usepackage{algorithmicx}
\usepackage{subfigure}
\usepackage{subcaption}
\usepackage{mathtools}
\usepackage{emptypage}
\usepackage{amsfonts,amssymb}
\usepackage{subfigure}
\usepackage{enumerate}
\usepackage{graphicx}
\usepackage{epstopdf}
\usepackage{etoolbox}
\usepackage{titlesec}
\usepackage[titles,subfigure]{tocloft}
\usepackage{cite}
\usepackage{mathrsfs}
\usepackage{listings}
\usepackage{amsmath}
\usepackage{tikz-cd}
\usepackage{pdfpages}
\usepackage{amsthm}
\usepackage{amsthm}
\usepackage{geometry}
\usepackage{fancyhdr}
\usepackage{lastpage}

\newtheorem{theorem}{Theorem}[section]
\newtheorem{lemma}[theorem]{Lemma}

\numberwithin{equation}{section}
\theoremstyle{definition}
\newtheorem{definition}[theorem]{Definition}
\theoremstyle{remark}
\newtheorem{remark}[theorem]{Remark}

\begin{document}
\noindent
\title{Comparison Results for a class of Neumann Problems of the $p$-Laplace Equation on Riemannian Manifolds}
{\Large \textbf{Comparison Results for a class of Neumann Problems of the $p$-Laplace Equation on Riemannian Manifolds}}\vspace{1\baselineskip}\par\noindent
{\large \textbf{Wentao Liu and Anqiang Zhu} }
\vspace{1\baselineskip}\par
\noindent
{\large \textbf{Abstract}}:We consider Neumann boundary value problems for the $p$-Laplace equation on Riemannian manifolds with nonnegative Ricci curvature. 
Using spherical symmetrization under appropriate constraints, we derive Talenti-type comparison results in Lorentz spaces.
We further show that, in contrast to the Robin case, the Neumann setting admits weaker constraints, which yields stronger comparison principles.
\vspace{0.5\baselineskip}\par\noindent
{\large \textbf{Keyword}}:Talenti comparison;Poisson equation;Neumann conditions;$p$-Laplace equation
\section{Introduction}
\par\noindent
For Neumann boundary value problems, 
solutions are determined only up to additive constants. 
Consequently, any meaningful comparison principle requires an additional normalization condition. 
Unlike the Dirichlet case, where the boundary condition naturally fixes the solution, 
the Neumann setting presents an intrinsic ambiguity, making comparison results substantially more delicate.

The origin of this circle of ideas can be traced back to the celebrated work of Talenti \cite{talenti1976elliptic}, 
who established a sharp comparison principle for elliptic equations with homogeneous Dirichlet boundary conditions by means of Schwarz symmetrization. 
More precisely, let $\Omega\subset\mathbb{R}^n$ be a bounded domain with Lipschitz boundary and consider the Poisson problem
\begin{equation}
\begin{cases}
-\Delta u=f,
& \text{in }\Omega,
\\
u=0,
& \text{on }\partial\Omega.
\end{cases}
\end{equation}

Together with its symmetrized counterpart
\begin{equation}
\begin{cases}
-\Delta v=f^\sharp,
& \text{in }\Omega^\sharp,
\\
v=0,
& \text{on }\partial\Omega^\sharp,
\end{cases}
\end{equation}
where $\Omega^\sharp$ is a Euclidean ball satisfying
$|\Omega^\sharp|=|\Omega|$ and $f^\sharp$ denotes the Schwarz symmetrization of $f$.

Talenti proved that the rearrangements of $u$ and $v$ satisfy a pointwise comparison, yielding a powerful method for deriving sharp functional inequalities. 
Since then, symmetrization techniques have become a fundamental tool in the study of elliptic partial differential equations, 
with applications ranging from Sobolev inequalities to Faber--Krahn type estimates.
The situation is considerably better for Dirichlet boundary conditions.
Starting from the seminal work of Talenti, symmetrization techniques
have led to a rich theory of comparison principles, often yielding
pointwise estimates between a solution and its symmetric counterpart.
We refer to the surveys
\cite{kawohl2006rearrangements,kesavan2006symmetrization}
for a detailed overview of the subject.

The influence of these ideas extends far beyond the Euclidean setting.
Talenti-type comparison results have recently been established on
Riemannian manifolds with positive Ricci curvature by Colladay et al.
\cite{colladay2018comparison}, and to nonnegative Ricci curvature by Chen et al.\cite{chen2021talenti}.
Later it is generalized to
$\mathrm{RCD}(K,N)$ spaces by Mondino et al.
\cite{mondino2021talenti}.

The comparison theorem for Robin boundary value problems was first considered by Alvino et al. \cite{alvino2023talenti}. 
These results were generalized to Riemannian manifolds by Chen et al.\cite{chen2023comparison}. 
And also were then extended to the 
$p$
-Laplacian by Amato et al. \cite{amato2022comparison}. 
Recently, Chen et al. \cite{chen2025talenti} generalized the corresponding conclusions to the anisotropic 
$p$
-Laplacian and proved rigidity properties for Talenti comparison inequalities.

Regarding Neumann boundary conditions, Maderna et al. \cite{maderna1979symmetrization} derived pointwise comparison results for weak solutions 
 under homogeneous Neumann conditions.
Ferone et al. \cite{ferone2005neumann} extended Talenti comparisons to cylindrical domains.
 Volzone \cite{volzone2016symmetrization} studied fractional Neumann problems and obtained similar results. 
For homogeneous Neumann problems on spherical shells, 
Langford \cite{langford2015symmetrization} established norm and oscillation comparisons under appropriate assumptions; 
see also \cite{langford2015neumann}.
More recently, Celentano et al.~\cite{celentano2025talenti}
considered nonhomogeneous Neumann problems and
obtained Lorentz norm comparison results under certain normalization assumptions.

Motivated by the above developments, we investigate
nonhomogeneous Neumann problems on Riemannian manifolds
with Ricci curvature bounded below.

Our framework includes both complete noncompact manifolds
with nonnegative Ricci curvature and positive asymptotic
volume ratio, and compact manifolds satisfying
\[
\mathrm{Ric}\ge (n-1).
\]
The corresponding comparison spaces are Euclidean balls
and spherical caps, respectively.
Motivated by the above developments, 
we investigate nonhomogeneous Neumann problems on Riemannian manifolds with nonnegative Ricci curvature.

Let $(M,g)$ be a complete noncompact Riemannian manifold with nonnegative Ricci curvature, 
and let $\Omega\subset M$ be a bounded domain with Lipschitz boundary. 
We consider the boundary value problem
\begin{equation}\label{eq:1.3}
\begin{cases}
-\operatorname{div}\bigl(|\nabla u|^{p-2}\nabla u\bigr)=f,
& \text{in }\Omega,
\\[1ex]
|\nabla u|^{p-2}\dfrac{\partial u}{\partial\nu}
+\beta^{p-1}=0,
& \text{on }\partial\Omega.
\end{cases}
\end{equation}

Following Talenti's philosophy, 
we compare solutions of \eqref{eq:1.3} with those of a suitable symmetrized problem posed on a model space. 
Let $\Omega^\sharp$ denote the symmetrized domain associated with $\Omega$ (see \ref{dom}), 
and let $f^\sharp$ be the symmetrization of $f$. We consider the problem
\begin{equation}\label{eq:1.4}
\begin{cases}
-\operatorname{div}\bigl(|\nabla v|^{p-2}\nabla v\bigr)
=
f^\sharp,
& \text{in }\Omega^\sharp,
\\[1ex]
|\nabla v|^{p-2}
\dfrac{\partial v}{\partial\nu}
+
(\beta^\sharp)^{p-1}
=
0,
& \text{on }\partial\Omega^\sharp.
\end{cases}
\end{equation}

\begin{remark}
Unless otherwise specified, 
the symbol $|\cdot|$ always denotes the corresponding Riemannian measure.
 Likewise, all differential operators, gradients, volume forms, 
 and boundary measures are understood with respect to the ambient manifold under consideration.
\end{remark}
To guarantee the solvability of the Neumann problems, the compatibility condition must be satisfied. Integrating the equations and applying the divergence theorem, we obtain
\begin{equation}
\beta^{p-1}
=
\frac{1}{|\partial\Omega|}
\int_\Omega f\,dV,
\qquad
(\beta^\sharp)^{p-1}
=
\frac{1}{|\partial\Omega^\sharp|}
\int_{\Omega^\sharp}
f^\sharp\,dV.
\end{equation}
We further assume that
\[
f \ge 0, \int_\Omega f\,dV > 0.
\]
Under this assumption, we have
\[
\beta^{p-1},\, (\beta^\sharp)^{p-1} > 0,
\]
and therefore both $\beta$ and $\beta^\sharp$ can be taken to be positive.
 
We know that if $u$ is a solution, then $u+C$ is also a solution for any constant $C$. 
Therefore, one may always normalize the solution by adding a suitable constant, and in particular, 
we may assume that the solution is positive.
However, this normalization alone is not sufficient to establish a meaningful comparison principle between solutions on $\Omega$ and $\Omega^\sharp$. 
Indeed, due to the non-uniqueness up to constants, an additional normalization condition is required to ensure a one-to-one correspondence between the two solutions.
In the present paper, we impose the following compatibility (normalization) condition:
\begin{align}
\frac{\beta^\sharp}{\beta}
\int_{\partial\Omega}
u^q\,d\sigma
&=
\theta_\kappa
\int_{\partial\Omega^\sharp}
v^q\,d\sigma.
\label{eq:1.6}
\end{align}
with $\theta_\kappa$ given by \eqref{theta}.

To formulate the normalization conditions, we introduce the boundary $L^q$-moments
\[
\alpha_q
=
\frac{1}{q}
\int_{\partial\Omega}
u^q\,d\sigma,
\qquad
\alpha_q^\sharp
=
\frac{1}{q}
\int_{\partial\Omega^\sharp}
v^q\,d\sigma.
\]

In terms of these quantities, the above relations can be rewritten as
\begin{align}\label{eq:1.7}
\frac{\beta^\sharp}{\beta}\,\alpha_q
&=
\theta_\kappa\,\alpha_q^\sharp.
\end{align}
We emphasize that the parameter $q$ is completely independent of $p$, 
and the normalization condition can be imposed for every $q\ge 1$. 

One of the observations of the present paper is that the comparison argument is not restricted to particular choices of $q$. 
This additional flexibility appears to be a characteristic feature of Neumann boundary value problems, 
where the invariance of solutions under additive constants allows a broader class of admissible normalizations.

Such a relaxation is no longer available in the Robin case, 
where the boundary condition imposes a much stronger constraint on the admissible normalizations; see \cite{amato2022comparison}.

We are now ready to state our main result.
\begin{theorem}\label{thm:1.2}
Let $q\ge 1$, and let $u$ be a positive weak solution to problem \eqref{eq:1.3}.

Suppose that $v$ is the positive solution to problem \eqref{eq:1.4}
satisfying condition \eqref{eq:1.7}. Then
\begin{align}\label{eq:1.8}
\|u\|_{L^{\lambda,1}(\Omega)}
\le
\theta_\kappa^{\frac{1}{\lambda}}
\|v\|_{L^{\lambda,1}(\Omega^\sharp)},
\qquad
\kappa=0,1,
\quad
0 < \lambda \le \frac{n(p-1)}{p(n-1)},
\end{align}
and
\begin{align}\label{eq:1.9}
\|u\|_{L^{q\lambda,q}(\Omega)}
\le
\theta_\kappa^{\frac{1}{q\lambda}}
\|v\|_{L^{q\lambda,q}(\Omega^\sharp)},
\qquad
\kappa=0,
\quad
0 < \lambda \le \frac{n(p-1)}{p(n-2)+n},
\end{align}
while
\begin{align}\label{eq:1.10}
\|u\|_{L^{q\lambda,q}(\Omega)}
\le
\theta_\kappa^{\frac{1}{q\lambda}}
\|v\|_{L^{q\lambda,q}(\Omega^\sharp)},
\qquad
\kappa=1,
\quad
0 < \lambda \le \frac{n(p-1)}{(p+1)(n-1)}.
\end{align}
In particular, if $n=2$, $\kappa=1$, and $0 < \lambda \le p-1$, then
\begin{align}\label{eq:1.11}
\|u\|_{L^{q\lambda,q}(\Omega)}
\le
\theta_\kappa^{\frac{1}{q\lambda}}
\|v\|_{L^{q\lambda,q}(\Omega^\sharp)}.
\end{align}
\end{theorem}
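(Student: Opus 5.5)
We follow the Talenti scheme adapted to Neumann data, as in the Robin case of \cite{amato2022comparison} and the Riemannian setting of \cite{chen2023comparison}. The plan has four steps: derive a differential inequality for the distribution function of $u$; compare it with the corresponding equality for $v$; integrate in time to obtain inequalities between integrals of distribution functions; and convert these into Lorentz norm bounds.

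\textbf{Step 1: differential inequality for $u$.} Let $\mu(t)=|\{u>t\}|$ and $U_t=\{u>t\}$. Test the weak formulation with $(u-t)_+$, differentiate in $t$, and use the boundary condition to get
\[
\int_{\{u=t\}\cap\Omega}|\nabla u|^{p-1}\,d\sigma=\int_{U_t}f\,dV-\beta^{p-1}\,|\partial U_t\cap\partial\Omega|\le\int_0^{\mu(t)}f^*(s)\,ds .
\]
Combine Cauchy--Schwarz/Hölder on the level set,
\[
P(U_t)\le\Bigl(\int_{\{u=t\}}|\nabla u|^{-1}\Bigr)^{(p-1)/p}\Bigl(\int_{\{u=t\}}|\nabla u|^{p-1}\Bigr)^{1/p},
\]
with the isoperimetric inequality on $M$. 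For $\kappa=0$ this is the sharp inequality under nonnegative Ricci and positive AVR, with constant $\theta_0^{1/n}$ (the AVR factor). For $\kappa=1$ it is Lévy--Gromov, with the spherical cap profile $I_\kappa$. One has to be careful here. The perimeter $P(U_t)$ includes the portion $\partial U_t\cap\partial\Omega$, which must be bounded using the boundary term. The cleanest way is to write
\[
P(U_t)\le|\{u=t\}\cap\Omega|+|\partial U_t\cap\partial\Omega|
\]
and then apply Young's inequality, exactly as in the Robin argument. This gives
\[
\theta_\kappa I_\kappa(\mu(t))\le\Bigl(-\mu'(t)\Bigr)^{\frac{p-1}{p}}\Bigl(\int_0^{\mu(t)}f^*\Bigr)^{\frac1p}+\frac{1}{\beta}\int_{\partial U_t\cap\partial\Omega}\ldots ,
\]
a Talenti inequality with an extra boundary term.

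\textbf{Step 2: equality for $v$.} For $v$ all inequalities above are equalities, since $v$ is radial and monotone. We then integrate the Step 1 inequality against $t^{q-1}$ over $(0,\infty)$. Using Fubini, the boundary term becomes
\[
\int_0^\infty t^{q-1}\,|\partial U_t\cap\partial\Omega|\,dt=\alpha_q .
\]
The normalization \eqref{eq:1.7} is exactly what makes these boundary contributions for $u$ and $v$ match after rescaling by $\beta^\sharp/\beta$ and $\theta_\kappa$. Since $q$ enters only through this Fubini identity, any $q\ge1$ works, which is the point stressed before the theorem.

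\textbf{Step 3: integrated comparison.} From the previous steps we obtain a Gronwall-type comparison. The goal is an integrated inequality of the form
\[
\int_0^\infty t^{q-1}\mu(t)^{\gamma}\,dt\le\theta_\kappa\int_0^\infty t^{q-1}\phi(t)^{\gamma}\,dt ,
\]
where $\phi$ is the distribution function of $v$. The plan is to multiply by $t^{q-1}\mu^{\gamma-\delta}$, integrate by parts, and apply Hölder. Note that $\|u\|_{L^{q\lambda,q}}^q=q\lambda\int t^{q-1}\mu^{1/\lambda}$, so one takes $\gamma=1/\lambda$. The case $q=1$ gives \eqref{eq:1.8}, and general $q$ gives \eqref{eq:1.9}–\eqref{eq:1.10}.

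\textbf{Main obstacle.} The hardest step is controlling $\int_0^{\mu}f^*$ times powers of $\mu$ through the isoperimetric profile. This is where the ranges of $\lambda$ arise: the relevant function of $\mu$ must be monotone or convex so that a Hardy/Hölder step closes. For $\kappa=0$ the profile is a pure power $I_0(m)\sim m^{(n-1)/n}$, which gives the thresholds $\frac{n(p-1)}{p(n-1)}$ and $\frac{n(p-1)}{p(n-2)+n}$. For $\kappa=1$ the spherical profile is not a power. There I would instead use the concavity of $I_1^{n/(n-1)}$ to compare it with the Euclidean power, which produces the smaller range $\frac{n(p-1)}{(p+1)(n-1)}$. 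When $n=2$, $I_1^2$ is quadratic and explicit, which yields $\lambda\le p-1$ in \eqref{eq:1.11}. I expect checking these monotonicity conditions, especially in the spherical case, to be the most delicate part.
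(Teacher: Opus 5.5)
Your Fubini identity $\int_0^\infty t^{q-1}|\partial U_t\cap\partial\Omega|\,dt=\alpha_q$ is right, but the proposal lacks the mechanism that makes the boundary terms of $u$ and $v$ cancel.

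\textbf{Step 1 has the wrong shape.} You split $P(U_t)$ additively, drop $-\beta^{p-1}|\partial U_t\cap\partial\Omega|$, and apply H\"older only on $\{u=t\}\cap\Omega$. Then $-\mu'$ enters to the power $\frac{p-1}{p}$ and the boundary measure enters additively. No multiplier turns that into an exact derivative plus a term linear in $|\partial U_t^e|$, which is what Fubini needs. What is needed is a single H\"older inequality on all of $\partial U_t$, with weight $h=|\nabla u|^{p-1}$ on $\partial U_t^i$ and $h=\beta^{p-1}$ on $\partial U_t^e$, so that $\int_{\partial U_t}h\,d\sigma=\int_{U_t}f\,dV$. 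This gives \eqref{eq:3.1}, which is linear in $-\mu'(t)$ and $|\partial U_t^e|$ and is an equality for $v$.

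\textbf{The ranges of $\lambda$ do not come from a Hardy/H\"older step.} Multiply \eqref{eq:3.1} by $t^{q-1}\mu^{1/\lambda}S_\kappa(\mu)^{-\frac{p}{p-1}}$. The interior part is controlled by $-t^{q-1}\,dF_\kappa(\mu(t))$. The boundary part carries the coefficient $g(\mu(t))$, where $g(l)=l^{1/\lambda}S_\kappa(l)^{-\frac{p}{p-1}}\bigl(\int_0^l f^*\,ds\bigr)^{\frac{1}{p-1}}$. The condition $\lambda\le\frac{n(p-1)}{p(n-1)}$ makes $g$ nondecreasing, for $\kappa=0$ and $\kappa=1$ alike (via $\frac{\cos r}{\sin^n r}\int_0^r\sin^{n-1}s\,ds\le\frac1n$). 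Hence the boundary contribution is at most $\frac{\alpha_q}{\beta}g(|\Omega|)$. For $v$ this is an identity: $v\equiv v_m$ on $\partial\Omega^\sharp$, so $\partial V_t^e\neq\emptyset$ only for $t<v_m$, where $\theta_\kappa\varphi(t)=|\Omega|$. Only after this freezing at $|\Omega|$ does \eqref{eq:1.7} make the two constants equal.

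\textbf{You read \eqref{eq:1.8} as the case $q=1$, but it is asserted under \eqref{eq:1.7} for every $q\ge1$.} For $q>1$, Fubini produces $\alpha_q$ only against the weight $t^{q-1}$, whereas $\|u\|_{L^{\lambda,1}}$ involves $\int_0^\infty\mu^{1/\lambda}\,dt$. Integrating with weight $1$ leaves the constants $\frac{\alpha_1}{\beta}$ and $\theta_\kappa\frac{\alpha_1^\sharp}{\beta^\sharp}$, which \eqref{eq:1.7} does not order. If $u$ is constant on $\partial\Omega$, the first exceeds the second by the factor $R^{\frac{p}{p-1}(1-\frac1q)}$, where $R=|\partial\Omega|/(\theta_\kappa|\partial\Omega^\sharp|)\ge1$.

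The missing idea is the Gronwall transfer:
\begin{enumerate}
\item Set $G(t)=\int_0^t\mu^{1/\lambda}\,dr+F_\kappa(\mu(t))$. Integration by parts gives $\tau^{q-1}G(\tau)\le(q-1)\int_0^\tau t^{q-2}G\,dt+C$.
\item The analogue $G^\sharp$ built from $\theta_\kappa\varphi$ satisfies this with equality and the same $C$ for $\tau\ge v_m$.
\item $G\le G^\sharp$ on $[0,v_m)$, since $\mu\le|\Omega|=\theta_\kappa\varphi$ there.
\item Lemma~\ref{lem:3.1}, applied with $C=0$ to $\int_0^\tau t^{q-2}(G-G^\sharp)\,dt$, propagates this to $G\le G^\sharp$ for all $\tau\ge v_m$. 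Letting $\tau\to\infty$ gives \eqref{eq:1.8}.
\end{enumerate}

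\textbf{For \eqref{eq:1.9}--\eqref{eq:1.11} a second iteration is needed.} Integration by parts leaves $\int_0^\infty t^{q-2}F_\kappa(\mu)\,dt$ to be compared with its $v$-counterpart. The paper does this by rerunning the scheme with multiplier $t^{q-1}F_\kappa(\mu)S_\kappa(\mu)^{-\frac{p}{p-1}}$. The smaller ranges come from the monotonicity of $F_\kappa S_\kappa^{-\frac{p}{p-1}}$ (Appendix~\ref{app:1}), not from comparing the spherical profile with a Euclidean power.
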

\begin{theorem}\label{thm:1.3}
Assume that $f\equiv1$ in $\Omega$.
Let $q\ge 1$, and let $u$ be a positive weak solution to problem \eqref{eq:1.3}.

Suppose that $v$ is the positive solution to problem \eqref{eq:1.4}
satisfying condition \eqref{eq:1.7}. If
\[
1\le p\le \frac{n}{n-1},
\]
then
\begin{align}
u^\sharp(x)\le \theta_\kappa v(x),
\qquad
\forall\,x\in\Omega^\sharp.
\end{align}

If
\[
p > \frac{n}{n-1},
\]
then the following estimates hold:
\begin{align}\label{eq:1.13}
\|u\|_{L^{\lambda,1}(\Omega)}
\le
\theta_\kappa^{\frac{1}{\lambda}}
\|v\|_{L^{\lambda,1}(\Omega^\sharp)},
\qquad
\kappa=0,1,
\quad
0<\lambda \le\frac{n(p-1)}{n(p-1)-p}.
\end{align}
Moreover, when $\kappa=0$, one has
\begin{align}\label{eq:1.14}
\|u\|_{L^{q\lambda,q}(\Omega)}
\le
\theta_\kappa^{\frac{1}{q\lambda}}
\|v\|_{L^{q\lambda,q}(\Omega^\sharp)},
\qquad
0<\lambda \le\frac{n(p-1)}{n(p-1)-p}.
\end{align}
\end{theorem}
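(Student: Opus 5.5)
The plan follows the Talenti scheme for Robin problems (Alvino et al., Amato et al.), adapted to the curved model. It uses the isoperimetric inequality on $M$ with constant $\theta_\kappa$: the asymptotic-volume-ratio version for $\kappa=0$, and L\'evy--Gromov for $\kappa=1$.

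\textbf{Step 1: differential inequality for $\mu(t)=|\{u>t\}|$.} Write $U_t=\{u>t\}$ and test the weak formulation with $\varphi=(u-t)^+$. Differentiating in $t$ gives
\[
\int_{\partial U_t\cap\Omega}|\nabla u|^{p-1}\,d\sigma
=\int_{U_t} f\,dV-\beta^{p-1}\,|\partial U_t\cap\partial\Omega|
\le \int_0^{\mu(t)} f^*(s)\,ds .
\]
The Neumann datum enters with a favorable sign, so it can be dropped here. However, I would keep the boundary part $\partial U_t\cap\partial\Omega$ in the isoperimetric step. Coarea and H\"older then give
\[
P(U_t)\le\Big(-\mu'(t)\Big)^{1/p'}\Big(\int_{\partial U_t\cap\Omega}|\nabla u|^{p-1}\Big)^{1/p}
+|\partial U_t\cap\partial\Omega| .
\]
The boundary term is controlled by $\int_t^\infty|\partial U_\tau\cap\partial\Omega|\,d\tau$-type integrals, which equal $\int_{\partial\Omega}(u-t)^+\,d\sigma$. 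Multiplying by $t^{q-1}$ and integrating produces exactly the boundary moment $\alpha_q$.

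Combining these with the isoperimetric inequality $P(U_t)\ge c_\kappa\,\theta_\kappa^{1/n}\mu(t)^{(n-1)/n}$ (modified appropriately on the sphere) yields an integral inequality for $u^*$. Its right-hand side involves $\int_0^{\mu}f^*$ and a term in $\alpha_q/\beta$.

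\textbf{Step 2: the model solution.} On $\Omega^\sharp$ the solution $v$ is radial and decreasing, and every inequality above becomes an equality. This gives an explicit formula for $v^*$ in which the boundary term is $\alpha_q^\sharp/\beta^\sharp$.

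\textbf{Step 3: comparison of norms.} Multiply the inequality of Step 1 by $\mu(t)^{\lambda-1}$ (for the $L^{\lambda,1}$ norm) or by $t^{q-1}\mu(t)^{\lambda-1}$ (for the $L^{q\lambda,q}$ norm) and integrate over $t\in(0,\infty)$. The local terms are then compared via Hardy- or Gronwall-type inequalities, as in Amato et al. Normalization \eqref{eq:1.7}, $\frac{\beta^\sharp}{\beta}\alpha_q=\theta_\kappa\alpha_q^\sharp$, is used precisely to identify the boundary contribution for $u$ with $\theta_\kappa$ times the one for $v$. The factor $\theta_\kappa$ rescales the measure and produces the powers $\theta_\kappa^{1/\lambda}$ and $\theta_\kappa^{1/(q\lambda)}$.

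\textbf{Main obstacle: the admissible range of $\lambda$.} The integrated inequality closes only if a certain function of $\mu$ is monotone. Its exponent, roughly $\lambda-1+\frac{p'}{n}\cdots$, must have the right sign, and this is what yields the thresholds $\frac{n(p-1)}{p(n-1)}$ and $\frac{n(p-1)}{p(n-2)+n}$. In the spherical case $\kappa=1$ the isoperimetric profile is not a pure power. I would try bounding it from below by its Euclidean-like behaviour on the relevant range, which should explain the different bound $\frac{n(p-1)}{(p+1)(n-1)}$ and the special case $n=2$.

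For Theorem \ref{thm:1.3} ($f\equiv1$), $\int_0^\mu f^*=\mu$ is explicit. When $p\le n/(n-1)$ the relevant exponent is nonnegative, so the inequality can be integrated pointwise in $s$ (Gronwall). This gives $u^*\le\theta_\kappa v^*$ directly. For larger $p$ one falls back to the Lorentz-norm argument of Step 3, now with the improved range.
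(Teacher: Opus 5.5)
There is a genuine gap in Step 1, and everything after it depends on that step. You discard $-\beta^{p-1}|\partial U_t^e|$ from the flux identity and then add $|\partial U_t^e|$ to the perimeter. This gives
\[
S_\kappa(\mu(t))\le(-\mu'(t))^{1/p'}\Bigl(\int_0^{\mu(t)}f^*(s)\,ds\Bigr)^{1/p}+|\partial U_t^e|,\qquad p'=\frac{p}{p-1},
\]
where the boundary term sits outside the $p'$-th power and carries no factor $1/\beta$.

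The comparison needs something different. It needs an inequality that is \emph{linear} in $-\mu'$ and in the boundary term, with one common coefficient. Then, after multiplying by $t^{q-1}$ times a nondecreasing function of $\mu$, the first part is a total derivative and the second is at most a multiple of $\alpha_q/\beta$. At the same time, the corresponding relation for $v$ must be an \emph{identity}.

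Your form does not give this. Linearizing it (by convexity of $s\mapsto s^{p'}$, or by Young) yields $S_\kappa^{p'}\le(-\mu')(\int_0^\mu f^*)^{1/(p-1)}+p'S_\kappa^{1/(p-1)}|\partial U_t^e|$. For $v$ on $t<v_m$, where $\varphi'=0$ and $\tilde S_\kappa(\varphi)=|\partial\Omega^\sharp|$, this is off by the factor $p'$. Moreover, no $\beta$ appears, so the normalization \eqref{eq:1.7}, which says exactly $\alpha_q/\beta=\theta_\kappa\alpha_q^\sharp/\beta^\sharp$, has nothing to act on. Your claim that a term in $\alpha_q/\beta$ emerges is therefore unsupported.

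The paper keeps the flux and applies a single H\"older inequality on all of $\partial U_t$. The weight is $h=|\nabla u|^{p-1}$ on $\partial U_t^i$ and $h=\beta^{p-1}$ on $\partial U_t^e$, and the proof uses $\int_{\partial U_t}h\,d\sigma=\int_{U_t}f\,dV$. The result is \eqref{eq:3.1}, which for $f\equiv1$ reads $S_\kappa(\mu)^{p/(p-1)}\le\mu^{1/(p-1)}\bigl(-\mu'+\beta^{-1}|\partial U_t^e|\bigr)$. It is an equality for $v$; for $t<v_m$ it is just the compatibility condition. Had you kept the dropped term, a discrete H\"older inequality would recombine your two pieces into exactly this.

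The second missing ingredient is what makes every $q\ge1$ admissible. Since \eqref{eq:1.7} controls only $\alpha_q$, every integration in $t$ must carry the weight $t^{q-1}$, including for the pointwise bound and for \eqref{eq:1.13}. Your unweighted multiplier for the $L^{\lambda,1}$ norm produces $\int_{\partial\Omega}u\,d\sigma$, on which \eqref{eq:1.7} gives no information when $q>1$. The weight is then removed by Lemma~\ref{lem:3.1} started at $\tau_0=v_m$. The steps for the pointwise claim are as follows.
\begin{enumerate}
\item Take $G(l)=l^{1/(p-1)}S_\kappa(l)^{-p/(p-1)}$. It is nondecreasing when $p\le n/(n-1)$; you guessed this sign condition correctly. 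For $\kappa=1$ it rests on $\frac{\cos r}{\sin^n r}\int_0^r\sin^{n-1}s\,ds\le\frac1n$.
\item The boundary term for $u$ is at most $\frac{\alpha_q}{\beta}G(|\Omega|)$. The one for $v$ equals $\theta_\kappa\frac{\alpha_q^\sharp}{\beta^\sharp}G(|\Omega|)$, because $\partial V_t^e\neq\emptyset$ only when $\theta_\kappa\varphi(t)=|\Omega|$. The two coincide by \eqref{eq:1.7}.
\item Integrating by parts with $\mathcal G'=G$, the function $\zeta(\tau)=\int_0^\tau t^{q-2}\bigl[\mathcal G(\mu(t))-\mathcal G(\theta_\kappa\varphi(t))\bigr]\,dt$ satisfies $\tau\zeta'\le(q-1)\zeta$.
\item $\zeta(v_m)\le0$ by the trivial comparison \eqref{eq:3.3}: $\mu(t)\le|\Omega|=\theta_\kappa\varphi(t)$ for $t\le v_m$.
\item Hence $\zeta'\le0$ for $\tau\ge v_m$, that is, $\mu(t)\le\theta_\kappa\varphi(t)$ for all $t$. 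The pointwise statement is obtained as this comparison of distribution functions.
\end{enumerate}
The same transfer with the weight $l^{\frac1{p-1}+\frac1\lambda}S_\kappa(l)^{-p/(p-1)}$, nondecreasing for $\lambda\le\frac{n(p-1)}{n(p-1)-p}$, gives \eqref{eq:1.13}. The estimate \eqref{eq:1.14} additionally uses that $l^{1/(p-1)}F_0(l)S_0(l)^{-p/(p-1)}$ is nondecreasing. The thresholds you cite, $\frac{n(p-1)}{p(n-1)}$ and $\frac{n(p-1)}{p(n-2)+n}$, belong to Theorem~\ref{thm:1.2}, not to this statement.
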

\begin{remark}
In order to make the above estimate precise, 
we recall the definition of Lorentz spaces $L^{\lambda,q}(\Omega)$, 
where $0<\lambda<\infty$ and $0<q\le\infty$.
\begin{align*}
\|g\|_{L^{\lambda,q}(\Omega)}
=
\begin{cases}
  \lambda^{\frac{1}{q}}
\left(
\displaystyle \int_0^\infty
\big(t\,|\{x\in\Omega: |g(x)|>t\}|^{1/\lambda}\big)^q \frac{dt}{t}
\right)^{1/q}, & 0<q<\infty,\\[1em]
\sup_{t>0} t\,|\{x\in\Omega: |g(x)|>t\}|^\frac1\lambda, & q=\infty.
\end{cases}
\end{align*}
For further discussions on related topics, we refer the reader to
\cite{kaminska1990some,carro2007recent}.
\end{remark}
\section{Preliminaries}
\subsection{Manifolds and Isoperimetric Inequalities}
Let $M$ be a complete noncompact Riemannian manifold with nonnegative Ricci curvature. Assume that $M$ has positive asymptotic volume ratio, namely,
\begin{equation}
\theta
=
\lim_{r\to\infty}
\frac{|B(r)|}{|B^n|\,r^n}
>0,
\end{equation}
where $B(r)$ denotes the geodesic ball of radius $r$ in $M$, and $|B^n|$ is the volume of the unit ball in $\mathbb{R}^n$.

In \cite{brendle2023sobolev}, Brendle proved that for every bounded domain $\Omega\subset M$ with smooth boundary $\partial\Omega$, the following isoperimetric inequality holds:
Let $\Omega^\sharp$ denote the Euclidean ball in $\mathbb{R}^n$ satisfying
\[
\theta |\Omega^\sharp| = |\Omega|.
\]
Then
\[
|\partial \Omega^\sharp|
=
n|B^n|^{\frac1n}
|\Omega^\sharp|^{\frac{n-1}{n}},
\]
and consequently
\[
|\partial \Omega|
\ge
\theta |\partial \Omega^\sharp|.
\]
For further developments on isoperimetric inequalities, we refer the reader to
\cite{agostiniani2018sharp,cavalletti2017sharp,klartag2017needle}
and the references therein.
Now let $M$ be a compact Riemannian manifold satisfying
\[
\operatorname{Ric}_M \ge (n-1)k,
\qquad k>0.
\]
Denote by $S_k^n$ the standard sphere of radius $1/\sqrt{k}$, and let $\Omega^\sharp$ be the geodesic ball in $S_k^n$ such that
\[
\frac{|M|}{|S_k^n|}
|\Omega^\sharp|
=
|\Omega|.
\]
By the Levy--Gromov isoperimetric inequality\cite{gromov2007metric}, we have
\[
|\partial \Omega|
\ge
\frac{|M|}{|S_k^n|}
|\partial \Omega^\sharp|.
\]

By a suitable scaling of the metric, it suffices to consider the case $k=1$. To unify the notation, we define
\begin{align}{\label{theta}}
\theta_\kappa=
\begin{cases}
\displaystyle
\lim_{r\to\infty}
\frac{|B(r)|}{|B^n|r^n},
& \kappa=0,
\\[2ex]
\displaystyle
\frac{|M|}{|S^n|},
& \kappa=1.
\end{cases}
\end{align}
For convenience, we denote by $M_\kappa$ the model space, namely,
\[
M_\kappa=
\begin{cases}
\mathbb{R}^n, & \kappa=0,\\
S^n, & \kappa=1,
\end{cases}
\]
equipped with their standard metrics.

Throughout this paper, $M$ denotes a Riemannian manifold satisfying
\[
\operatorname{Ric}_M \ge (n-1)\kappa,
\qquad
\kappa\in\{0,1\}.
\]
In the case $\kappa=0$, we additionally assume that $M$ has positive asymptotic volume ratio.

Under these assumptions, for every measurable set
$\Omega\subset M$, we denote by $\Omega^\sharp$ the geodesic ball in the model space $M_\kappa$ such that
\begin{equation}\label{dom}
|\Omega|
=
\theta_\kappa |\Omega^\sharp|.
\end{equation}
With this normalization, the corresponding isoperimetric inequalities take the unified form
\begin{equation}\label{eq:isoperimetric}
|\partial\Omega|
\ge
\theta_\kappa\,|\partial\Omega^\sharp|.
\end{equation}
Moreover,
equality holds if and only if $M$ is isometric to the model space $M_\kappa$ and $\Omega$ is a geodesic ball in $M_\kappa$.
\subsection{Rearrangements}
Talenti's classical approach relies on spherical symmetrization, which is more commonly referred to as the rearrangement method in the modern literature. 
In this section, we briefly recall the basic notation and several fundamental properties of rearrangements that will be used throughout the paper.
We note that these ideas also naturally arise in convex geometry. For further references, see \cite{gruber2007convex},\cite{schneider2013convex}.
\begin{definition}
Let $w:\Omega\to[0,\infty)$ be a measurable function. For $t\ge0$, define
\[
\Omega_{w,t}
=
\{x\in\Omega:|w(x)|>t\},
\qquad
\mu_w(t)
=
|\Omega_{w,t}|.
\]
The decreasing rearrangement $w^*$ of $w$ is defined by
\begin{align}
w^*(s)
=
\begin{cases}
\operatorname*{ess\,sup}_{x\in\Omega}w(x),
& s=0,
\\[0.3em]
\inf\{t\ge0:\mu_w(t)<s\},
& s>0,
\end{cases}
\label{eq:2.2}
\end{align}
for $s\in[0,|\Omega|]$.
The Schwarz decreasing rearrangement of $w$ is the radially symmetric function $w^\sharp:\Omega^\sharp\to[0,\infty)$ defined by
\begin{align}
w^\sharp(x)
=
w^*\bigl(
\theta_\kappa|B_{\kappa}(|x|)|\
\bigr),
\qquad
x\in\Omega^\sharp,
\end{align}
where $B_{\kappa}(|x|)$ denotes the geodesi ball in ${M}_\kappa$ with radius $|x|$. 
\end{definition}
We emphasize that our definition of spherical rearrangement contains the normalization factor $\theta_\kappa$. 
Consequently, compared with the classical Euclidean rearrangement, 
the distribution function and several related identities acquire an additional factor $\theta_\kappa$. 
We refer the reader to \cite{chen2021talenti} for further discussion.
The distribution functions of $w$ and its rearrangement $w^\sharp$ satisfy
\begin{equation}
\mu_w(t)
=
\theta_\kappa \mu_{w^\sharp}(t).
\end{equation}

As a consequence, the decreasing rearrangement $w^*$ and the spherical rearrangement $w^\sharp$ are related through the identity
\[
\mu_w(t)
=
\mu_{w^*}(t)
=
\theta_\kappa \mu_{w^\sharp}(t).
\]

The functions $w$, $w^*$ and $w^\sharp$ are equimeasurable in the sense that
\begin{equation}\label{eq:2.8}
\|w\|_{L^p(\Omega)}
=
\|w^*\|_{L^p(0,|\Omega|)}
=
\theta_\kappa^{\frac{1}{p}}
\|w^\sharp\|_{L^p(\Omega^\sharp)}.
\end{equation}
Given measurable functions $u$, $v$ on $\Omega$, the Hardy-Littlewood inequality holds,
\begin{align}\label{eq:2.6}
  \int_{\Omega} |uv| \,dV \le \int_{0}^{|\Omega|} u^*(s)v^*(s) \,ds.
\end{align}
Choosing $v = \chi_{\{|u| > t \}} $ in \eqref{eq:2.6}, one has
\begin{align}
   \int_{\Omega} |u|\,dV \le \int_{0}^{\mu_(t)} u^*(s)\, ds
\end{align}
For $U\subset\Omega$, we define
\[
\partial U^i := \partial U \cap \Omega,
\qquad
\partial U^e := \partial U \cap \partial\Omega.
\]

Let $u$ and $v$ be solutions to problems \eqref{eq:1.3} and \eqref{eq:1.4}, respectively. For $t\in\mathbb{R}$, we define
\begin{align*}
U_t &= \{x\in\Omega: |u(x)|>t\}, \qquad \mu(t)=|U_t|,\\
V_t &= \{x\in\Omega^\sharp: |v(x)|>t\}, \qquad  \varphi(t)=|V_t|.
\end{align*}
\section{Proof of main result}
For the sake of notation, we introduce several geometric quantities associated with the model space $M_\kappa$, involving only basic Riemannian geometric concepts.

We define the function $S_\kappa$ by
\[
S_\kappa(l)
=
\frac{dL_\kappa}{dr}\circ L_\kappa^{-1}(l),
\]
where
\[
L_\kappa(r)
=
n|B^n|\theta_\kappa
\int_0^r \mathrm{sn}_\kappa^{\,n-1}(s)\,ds,
\]
and
\[
\mathrm{sn}_\kappa(r)=
\begin{cases}
r, & \kappa=0,\\
\sin r, & \kappa=1.
\end{cases}
\]

The function $L_\kappa$ represents the volume of a geodesic ball of radius $r$ in the model space $M_\kappa$, while $S_\kappa$ describes the corresponding surface area expressed as a function of the volume variable $l$. The factor $\theta_\kappa$ is introduced for normalization and simplifies subsequent computations.

To establish our main results, the following lemma plays a crucial role.
\begin{lemma}[Gronwall inequality]
\label{lem:3.1}

Let $\zeta:[\tau_0,\infty)\to\mathbb{R}$ be a continuously differentiable function satisfying
\[
\tau\,\zeta'(\tau)
\le
(q-1)\zeta(\tau)+C,
\qquad
\forall\,\tau\ge\tau_0>0,
\]
for some constant $C\ge0$.

Then, for every $\tau\ge\tau_0$,
\[
\zeta(\tau)
\le
\left(
\zeta(\tau_0)+\frac{C}{q-1}
\right)
\left(
\frac{\tau}{\tau_0}
\right)^{q-1}
-\frac{C}{q-1}.
\]

Consequently,
\[
\zeta'(\tau)
\le
\frac{(q-1)\zeta(\tau_0)+C}{\tau_0}
\left(
\frac{\tau}{\tau_0}
\right)^{q-2}.
\]
\end{lemma}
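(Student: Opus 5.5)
The plan is the standard integrating-factor argument for a linear differential inequality, with the Euler-type factor $\tau^{-(q-1)}$. Throughout we assume $q\neq 1$, so that $C/(q-1)$ makes sense; in the applications $q>1$. For $q=1$ the same argument gives a logarithmic bound instead, and we would treat it separately if needed.

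\emph{Step 1: integrating factor.} Set $\eta(\tau)=\tau^{-(q-1)}\zeta(\tau)$ on $[\tau_0,\infty)$. Since $\tau_0>0$, $\eta$ is $C^1$, and
\[
\eta'(\tau)=\tau^{-q}\bigl(\tau\zeta'(\tau)-(q-1)\zeta(\tau)\bigr)\le C\,\tau^{-q}.
\]
Integrating from $\tau_0$ to $\tau$ and using $\int_{\tau_0}^{\tau}s^{-q}\,ds=\frac{1}{q-1}\bigl(\tau_0^{1-q}-\tau^{1-q}\bigr)$ gives
\[
\tau^{1-q}\zeta(\tau)-\tau_0^{1-q}\zeta(\tau_0)\le \frac{C}{q-1}\bigl(\tau_0^{1-q}-\tau^{1-q}\bigr).
\]
Multiplying by $\tau^{q-1}>0$ and rearranging yields exactly the first claimed bound:
\[
\zeta(\tau)\le\Bigl(\zeta(\tau_0)+\frac{C}{q-1}\Bigr)\Bigl(\frac{\tau}{\tau_0}\Bigr)^{q-1}-\frac{C}{q-1}.
\]

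\emph{Step 2: derivative bound.} Feed the bound from Step 1 back into the hypothesis:
\[
\tau\zeta'(\tau)\le (q-1)\zeta(\tau)+C\le \bigl((q-1)\zeta(\tau_0)+C\bigr)\Bigl(\frac{\tau}{\tau_0}\Bigr)^{q-1}.
\]
This uses $q-1>0$, so that multiplying the bound on $\zeta$ by $q-1$ preserves the inequality. The constant terms cancel because $(q-1)\cdot\bigl(-\tfrac{C}{q-1}\bigr)+C=0$. Dividing by $\tau=\tau_0\,(\tau/\tau_0)$ gives the second assertion.

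\emph{Main obstacle.} There is no genuine analytic difficulty. The only care needed is with signs: $\tau_0>0$ makes the factor $\tau^{-(q-1)}$ smooth and positive, and the derivative bound uses $q>1$. For $q<1$, Step 1 is still valid, since the integral formula holds for every $q\ne1$, but Step 2 would reverse the inequality. So we state the consequence under $q>1$.
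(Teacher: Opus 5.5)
The paper states this lemma without proof, so there is no argument of the authors' to compare with. Your integrating-factor proof is correct and complete.

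With $\eta(\tau)=\tau^{1-q}\zeta(\tau)$ you get $\eta'(\tau)\le C\tau^{-q}$. Integrating gives the first bound for every $q\neq 1$. Substituting it back into the hypothesis gives the derivative bound exactly when $q>1$, since the constant terms cancel after multiplying by $q-1>0$.

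Two remarks are worth recording, because the paper invokes the lemma for all $q\ge 1$.

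\emph{The case $q=1$.} The second conclusion reads $\zeta'(\tau)\le C/\tau$, which is literally the hypothesis. So it still holds, even though the first bound must be replaced by $\zeta(\tau)\le \zeta(\tau_0)+C\log(\tau/\tau_0)$.

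\emph{The case $q<1$.} Your warning is justified: the derivative bound genuinely fails. Take $q=0$, $C=0$, $\tau_0=1$, and $\zeta(\tau)=h(\tau)/\tau$, where $h$ is smooth and nonincreasing with $h(1)=0$ and $h\equiv-1$ on $[2,\infty)$. Then the hypothesis is $(\tau\zeta)'=h'\le 0$, which holds, and so does the first bound $\zeta\le 0$. But $\zeta'(\tau)=\tau^{-2}>0$ for $\tau\ge 2$, while the claimed upper bound is $0$.
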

The following lemma is the main ingredient of this paper. Its idea originates from the work of Alvino et al.\cite{alvino2023talenti}, 
and it also serves as a fundamental tool in many comparison results for Robin boundary value problems.
\begin{lemma}
  Let $u$ and $v$ be positive solutions of the above problems. For $t>0$, we have the following result:
\begin{align}
S_\kappa(\mu(t))^{\frac{p}{p-1}}
&\le
\left(\int_0^{\mu(t)} f^*(s)\,ds\right)^\frac{1}{p-1}
\left(-\mu'(t)+\frac{1}{\beta}\int_{\partial U_t^e} d\sigma\right),
\label{eq:3.1}
\\
\tilde S_\kappa(\varphi(t))^\frac{p}{p-1}
&=
\theta_\kappa^{-\frac{1}{p-1}}
\left(\int_0^{\theta_\kappa \varphi(t)} f^*(s)\,ds\right)^{\frac{1}{p-1}}
\left(-\varphi'(t)+\frac{1}{\beta^\sharp}\int_{\partial V_t^e} d\sigma\right),
\label{eq:3.2}
\end{align}
\end{lemma}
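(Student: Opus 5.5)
We follow the scheme of Alvino et al. \cite{alvino2023talenti}, adapted to the $p$-Laplacian and to the weighted model spaces. Throughout, $t>0$ is fixed and we work with the superlevel set $U_t$, whose boundary splits as $\partial U_t=\partial U_t^i\cup\partial U_t^e$.

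\textbf{Step 1: integrating the equation over $U_t$.} We test the weak formulation of \eqref{eq:1.3} with $(u-t)_+$. Differentiating in $t$ gives, for a.e. $t$,
\[
\int_{\partial U_t^i}|\nabla u|^{p-1}\,d\sigma=\int_{U_t}f\,dV-\beta^{p-1}|\partial U_t^e| .
\]
The Hardy--Littlewood inequality \eqref{eq:2.6}, together with the fact that the boundary term is nonnegative, then yields
\[
\int_{\partial U_t^i}|\nabla u|^{p-1}\,d\sigma+\beta^{p-1}|\partial U_t^e|\le\int_0^{\mu(t)}f^*(s)\,ds .
\]

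\textbf{Step 2: perimeter estimate and H\"older.} By the isoperimetric inequality \eqref{eq:isoperimetric}, written in the volume variable, we have $S_\kappa(\mu(t))\le|\partial U_t|=\int_{\partial U_t^i}d\sigma+\int_{\partial U_t^e}d\sigma$. We split each piece using H\"older's inequality with exponents $p$ and $p/(p-1)$:
\[
\int_{\partial U_t^i}d\sigma\le\Big(\int_{\partial U_t^i}|\nabla u|^{p-1}\Big)^{\frac1p}\Big(\int_{\partial U_t^i}|\nabla u|^{-1}\Big)^{\frac{p-1}{p}},
\]
and similarly
\[
\int_{\partial U_t^e}d\sigma=\Big(\beta^{p-1}\int_{\partial U_t^e}d\sigma\Big)^{\frac1p}\Big(\beta^{-1}\int_{\partial U_t^e}d\sigma\Big)^{\frac{p-1}{p}}.
\]
Next we apply the discrete H\"older inequality $a_1b_1+a_2b_2\le(a_1^p+a_2^p)^{1/p}(b_1^{p'}+b_2^{p'})^{1/p'}$. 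Using the coarea formula $-\mu'(t)\ge\int_{\partial U_t^i}|\nabla u|^{-1}\,d\sigma$, this gives
\[
S_\kappa(\mu(t))\le\Big(\int_0^{\mu(t)}f^*\Big)^{\frac1p}\Big(-\mu'(t)+\frac1\beta\int_{\partial U_t^e}d\sigma\Big)^{\frac{p-1}{p}} .
\]
Raising this to the power $p/(p-1)$ gives \eqref{eq:3.1}.

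\textbf{Step 3: the symmetric problem.} For $v$ every inequality becomes an equality. The solution $v$ is radial, and by uniqueness of the symmetric problem up to constants it is monotone in the radius, so $V_t$ is a geodesic ball (centered) or an annulus adjacent to $\partial\Omega^\sharp$. Hence $|\nabla v|$ is constant on $\partial V_t^i$, and $\partial V_t^e$ is either empty or all of $\partial\Omega^\sharp$. Moreover, $f^\sharp$ is radially decreasing, so $\int_{V_t}f^\sharp=\theta_\kappa^{-1}\int_0^{\theta_\kappa\varphi(t)}f^*$. This accounts for the $\theta_\kappa$ factors in \eqref{eq:3.2}, where $\tilde S_\kappa$ denotes the unnormalized perimeter of $V_t$ as a function of its volume. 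The equality cases of H\"older's inequality then hold because the ratios $|\nabla v|^{p-1}/|\nabla v|^{-1}$ and $\beta^{\sharp\,p-1}/\beta^{\sharp\,-1}$ coincide on the respective boundary pieces, and this coincidence follows from the radial structure and the Neumann condition with $|\nabla v|=\beta^\sharp$ on $\partial\Omega^\sharp$.

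\textbf{Main obstacle.} The delicate point is the case where $V_t$ meets $\partial\Omega^\sharp$. There one needs $|\nabla v|=\beta^\sharp$ at level $t$ on $\partial V_t^i$ to get equality in the discrete H\"older step. If this fails, the cleanest fallback is to verify \eqref{eq:3.2} directly by explicit radial computation, using $\tilde S_\kappa(\varphi)|\nabla v|^{p-1}=\int_{V_t}f^\sharp$ on $\partial V_t^i$ and $\varphi'=-\tilde S_\kappa/|\nabla v|$.

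A second technical point is justifying the differentiation in Step 1 for Lipschitz $\Omega$ and a.e. $t$, including the set $\{\nabla u=0\}$. This is handled by the standard coarea argument.
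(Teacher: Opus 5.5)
Your proof is correct and follows the paper's argument essentially line by line. You use the truncated test function with the coarea formula to get $\int_{\partial U_t^i}|\nabla u|^{p-1}\,d\sigma+\beta^{p-1}|\partial U_t^e|=\int_{U_t}f\,dV$, then H\"older on $\partial U_t$ with weight $|\nabla u|^{p-1}$ resp.\ $\beta^{p-1}$ (your two-piece split plus discrete H\"older is the same inequality), then Hardy--Littlewood and the isoperimetric inequality, with every step an equality for the radial $v$.

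Your ``main obstacle'' is illusory, though. Since $f^\sharp\ge0$, integrating the radial equation from the center shows $v$ is radially nonincreasing. So for $t<v_m$ you have $V_t=\Omega^\sharp$ with $\partial V_t^i=\emptyset$, and for $t>v_m$ the set $V_t$ is a centered ball with $\partial V_t^e=\emptyset$. The annulus case never occurs and the two boundary pieces are never present at once. Your claim $|\nabla v|=\beta^\sharp$ on interior level sets, which is false in general, is therefore not needed.
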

where $\tilde{S_\kappa}(l) = \theta_\kappa^{-1}S_\kappa(\theta_\kappa l)$.
\begin{proof}
Let $t>0$ and $c>0$. We introduce the standard truncation function associated with $u$ at levels $t$ and $t+c$, 
defined by
\[
\psi(x)=
\begin{cases}
0, & u(x)\le t,\\[1ex]
u(x)-t, & t<u(x)<t+c,\\[1ex]
c, & u(x)\ge t+c.
\end{cases}
\]
This function is Lipschitz and belongs to $W^{1,p}(\Omega)$ whenever $u\in W^{1,p}(\Omega)$.
Choosing $\psi$ as a test function in the weak formulation, we obtain
\begin{align*}
\int_{U_t\setminus U_{t+c}} |\nabla u|^p\,dV
&+\beta^{p-1} c\int_{\partial U_{t+c}^e} d\sigma
+\beta^{p-1}\int_{\partial U_t^e\setminus \partial U_{t + c }^e}(u-t)\,d\sigma
\\
&=
\int_{U_t\setminus U_{t+c}} f(u-t)\,dV
+c\int_{U_{t+c}} f\,dV.
\end{align*}

Dividing by $c$ and letting $c\to0$, using the coarea formula 
\[
-\mu'(t)
=
\int_{\partial U_t^i}\frac{1}{|\nabla u|}\,d\sigma.
\]
We obtain for a.e. $t>0$,
\[
\int_{\partial U_t^i} |\nabla u|^{p-1}\,d\sigma
+\beta\int_{\partial U_t^e} d\sigma
=
\int_{U_t} f\,dx.
\]

Set
\[
h(x)=
\begin{cases}
|\nabla u|^{p-1}, & x\in \partial U_t^i,\\
\beta^{p-1}, & x\in \partial U_t^e.
\end{cases}
\]

Then Hölder's inequality gives
\begin{align*}
|\partial U_t|
&\le
\left(\int_{\partial U_t} h\,d\sigma\right)^\frac{1}{p}
\left(\int_{\partial U_t} \frac{1}{h^\frac{1}{p-1}}\,d\sigma\right)^\frac{p-1}{p}
\\
&\le \left(\int_{\partial U_t}h\,d\sigma\right)^\frac{1}{p}\left(\int_{\partial U_t^i}\frac{1}{|\nabla u|}\,d\sigma + \frac{1}{\beta}\int_{\partial U_t^e}\,d\sigma\right)^\frac{p-1}{p}
\\
&\le \left(\int_{0}^{\mu(t)}f^*(s)ds\right)^\frac{1}{p}\left(-\mu'(t) + \frac{1}{\beta}\int_{\partial U_t^e}\,d\sigma\right)^\frac{p-1}{p}
\end{align*}

Combining this with the isoperimetric inequality yields
\[
S_\kappa(\mu(t))^{\frac{p}{p-1}}
\le
|\partial U_t|^{\frac{p}{p-1}}
\le
\left(\int_0^{\mu(t)} f^*(s)\,ds\right)^\frac{1}{p-1}
\left(-\mu'(t)+\frac{1}{\beta}\int_{\partial U_t^e} d\sigma\right).
\]
The argument for $v$ is analogous. Indeed, since $v$ is radial, nonincreasing and $\Omega^\sharp$ is a geodesic ball, 
all the above inequalities become equalities. In particular, we have
\[
\tilde{S}_\kappa(\varphi(t)) = |\partial V_t|,
\qquad
\int_{V_t} f^\sharp\,dV
=
\theta_\kappa^{-1}
\int_0^{\theta_\kappa \varphi(t)} f^*(s)\,ds.
\]

This completes the proof.

\end{proof}
We next observe an important property of the radial solution $v$ and $u$. In particular, 
$v$ and $u$ both attains its minimum on the boundary, and moreover, $v$
 is constant on the boundary. Based on this observation, 
we first establish a comparison result for minima under condition \eqref{eq:1.7}.
\noindent
We define
\[
u_m = \inf_{\Omega} u,
\qquad
v_m = \inf_{\Omega^\sharp} v.
\]
\noindent
We compute as follows:
\begin{align*}
\theta_\kappa v_m^{q} |\partial\Omega^\sharp|
&=
\theta_\kappa \int_{\partial\Omega^\sharp} v^{q}\,d\sigma
\\
&=
\frac{\beta^\sharp}{\beta}
\int_{\partial\Omega} u^{q}\,d\sigma\quad \text{ Condition \eqref{eq:1.7}}
\\
&\ge
\frac{\beta^\sharp}{\beta}\, u_m^{q} |\partial\Omega|
\\
&\ge
\left(\frac{|\partial\Omega|}{\theta_\kappa |\partial\Omega^\sharp|}\right)^{\frac{1}{p-1}} 
u_m^{q} |\partial\Omega|\quad \text{ Definition of } \beta \text{,} \beta^\sharp \text{ and equation}~\eqref{eq:2.8}
\\
&\ge 0,
\end{align*}
\noindent
consequently, we obtain
\[
v_m^{q}
\ge
\left(\frac{|\partial\Omega|}{\theta_\kappa |\partial\Omega^\sharp|}\right)^{1+\frac{1}{p-1}}
u_m^{q}.
\]
\noindent
Finally, by the isoperimetric inequality~\eqref{eq:isoperimetric},
\[
|\partial\Omega|
\ge
\theta_\kappa |\partial\Omega^\sharp|,
\]
which yields
\[
v_m \ge u_m.
\]
Moreover, we observe that
\begin{align}\label{eq:3.3}
\mu(t) \le \theta_\kappa \varphi(t),
\qquad
\text{for all } 0\le t \le v_m.
\end{align}
We therefore obtain the following result:
\begin{lemma}
Let $u$ and $v$ be positive solutions of \eqref{eq:1.3} and \eqref{eq:1.4}, respectively. Then, for all $\tau\ge v_m$, we have
\begin{align*}
\int_0^{\tau}t^{q-1}\int_{\partial V_t^e} d\sigma dt
=
\frac{1}{q}\int_{\partial\Omega^\sharp} v^q\, d\sigma
= \alpha_q^\sharp
\end{align*}
while
\begin{align*}
\int_0^{\tau}t^{q-1}\int_{\partial U_t^e} d\sigma dt
\le
\frac{1}{q}\int_{\partial\Omega} u^q\, d\sigma
=\alpha_q
\end{align*}
\end{lemma}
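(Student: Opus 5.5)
The plan is a layer-cake (Fubini) computation on the boundary, exactly as one computes $\int w^q=q\int_0^\infty t^{q-1}\mu_w(t)\,dt$, but carried out with respect to the surface measure $d\sigma$ on $\partial\Omega$ and $\partial\Omega^\sharp$. By definition $\partial U_t^e=\partial U_t\cap\partial\Omega$, and $\partial U_t^e$ is contained in the set of boundary points where the trace of $u$ is at least $t$. For a.e.\ $t$ (all but countably many, where level sets of the trace can carry positive surface measure), $\sigma(\partial U_t^e)$ coincides with the boundary distribution function
\[
\sigma(\{x\in\partial\Omega:\ u(x)>t\}).
\]
Hence I will write
\[
\int_0^\tau t^{q-1}\int_{\partial U_t^e}d\sigma\,dt=\int_{\partial\Omega}\int_0^{\min(\tau,u(x))}t^{q-1}\,dt\,d\sigma(x)=\frac1q\int_{\partial\Omega}\min(u,\tau)^q\,d\sigma ,
\]
which is Tonelli applied to the nonnegative function $(x,t)\mapsto t^{q-1}\chi_{\{u(x)>t\}}$ on $\partial\Omega\times(0,\tau)$.

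For $v$ the situation is explicit. The solution $v$ is radial and nonincreasing, so on $\partial\Omega^\sharp$ it is identically equal to its minimum $v_m$, and $V_t\supset\partial\Omega^\sharp$ exactly when $t<v_m$. Thus $\sigma(\partial V_t^e)=|\partial\Omega^\sharp|$ for $t<v_m$ and $0$ for $t>v_m$. For $\tau\ge v_m$ the integral therefore equals
\[
|\partial\Omega^\sharp|\int_0^{v_m}t^{q-1}\,dt=\frac1q\,v_m^q\,|\partial\Omega^\sharp|=\frac1q\int_{\partial\Omega^\sharp}v^q\,d\sigma=\alpha_q^\sharp ,
\]
which gives the equality for every $\tau\ge v_m$.

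For $u$, the formula above gives $\frac1q\int_{\partial\Omega}\min(u,\tau)^q\,d\sigma\le\frac1q\int_{\partial\Omega}u^q\,d\sigma=\alpha_q$, since $\min(u,\tau)\le u$ pointwise and $u>0$. This yields the inequality for every $\tau>0$, in particular for $\tau\ge v_m$.

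The main obstacle is justifying the identification of $\sigma(\partial U_t^e)$ with the boundary distribution function of the trace of $u$ when $u$ is only a $W^{1,p}$ weak solution. The trace must be understood in the sense of the weak formulation, and the set-theoretic boundary $\partial U_t$ must be replaced by its measure-theoretic version, so that $\partial U_t^e=\{u>t\}\cap\partial\Omega$ up to $\sigma$-null sets. I would handle this by interpreting $\partial U_t^e$ this way from the start, which is how the term $\beta^{p-1}\int_{\partial U_{t+c}^e}d\sigma$ arose in the previous lemma anyway. The countably many exceptional $t$ do not affect the $dt$-integral. Only an inequality is needed for $u$, so even the containment $\partial U_t^e\subset\{u\ge t\}$ suffices after integrating.
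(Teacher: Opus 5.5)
Your proof is correct and follows the paper's argument. Both use a Fubini/layer-cake computation on the boundary, the bound $\int_0^\tau\le\int_0^\infty$ for $u$ (your truncated identity $\frac1q\int_{\partial\Omega}\min(u,\tau)^q\,d\sigma$ is the same step), and the fact that $v\equiv v_m$ on $\partial\Omega^\sharp$, so $\partial V_t^e=\emptyset$ for $t>v_m$. Your justification that $\sigma(\partial U_t^e)=\sigma(\{u>t\}\cap\partial\Omega)$ for all but countably many $t$ makes precise what the paper uses implicitly.
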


\begin{proof}
By Fubini's theorem and the layer-cake representation, we obtain
\begin{align*}
\int_0^{\infty}t^{q-1}\int_{\partial V_t^e} d\sigma dt
&=
\int_{\partial\Omega^\sharp}\int_0^{v(x)} t^{q-1}dt\, d\sigma
=
\frac{1}{q}\int_{\partial\Omega^\sharp} v^{q}\, d\sigma,
\\
\int_0^{\infty}t^{q-1}\int_{\partial U_t^e} d\sigma dt
&=
\int_{\partial\Omega}\int_0^{u(x)}t^{q-1} dt\, d\sigma
=
\frac{1}{q}\int_{\partial\Omega} u^q\, d\sigma.
\end{align*}

Since the integrands are nonnegative, we clearly have
\[
\int_0^{\tau}t^{q-1}\int_{\partial U_t^e} d\sigma dt
\le
\int_0^{\infty}t^{q-1}\int_{\partial U_t^e} d\sigma dt.
\]

Moreover, for $\tau\ge v_m$, we have $\partial V_\tau^e \cap \partial \Omega^\sharp=\emptyset$ for $\tau>v_m$, hence
\[
\int_0^{\tau}t^{q-1}\int_{\partial V_t^e} d\sigma dt
=
\int_0^{\infty}t^{q-1}\int_{\partial V_t^e} d\sigma dt.
\]
This completes the proof.

\end{proof}
We are now ready to prove the theorem.
\begin{proof}[Proof of Theorem~\ref{thm:1.2}]
  We consider the positive solutions $u$ and $v$ to \eqref{eq:1.3} and \eqref{eq:1.4}, respectively, 
  which satisfy condition \eqref{eq:1.7}.

Let $0 < \lambda\le \frac{n(p-1)}{p(n-1)}$.

Multiplying inequality \eqref{eq:3.1} by
$t^{q-1}\,\mu(t)^{\frac{1}{\lambda}}S_\kappa(\mu(t))^{-\frac{p}{p-1}}$ and integrating over $[0,\tau]$ with $\tau>v_m$, we obtain
\begin{align*}
\int_0^\tau t^{q-1}\,\mu(t)^{\frac{1}{\lambda}}\,dt
&\le
\int_0^\tau
-t^{q-1}\,\mu'(t)\,\mu(t)^{\frac{1}{\lambda}}S_\kappa(\mu(t))^{-\frac{p}{p-1}}
\left(\int_0^{\mu(t)} f^*(s)\,ds\right)^{\frac{1}{p-1}}\,dt
\notag\\
&\quad
+\frac{1}{\beta}
\int_0^\tau
t^{q-1}\,\mu(t)^{\frac{1}{\lambda}}S_\kappa(\mu(t))^{-\frac{p}{p-1}}
\left(\int_{\partial U_t^e} d\sigma\right)
\left(\int_0^{\mu(t)} f^*(s)\,ds\right) ^\frac{1}{p-1}dt.
\end{align*}
\noindent
By the monotonicity of $l^{1/\lambda}S_\kappa(l)^{-\frac{p}{p-1}}$ for $0 < \lambda\le \frac{n(p-1)}{p(n-1)}$~(see Appendix~\ref{app:1}) and Lemma~\ref{lem:3.1}, we deduce
\begin{align*}
&\frac{1}{\beta}
\int_0^\tau t^{q-1}\,\mu(t)^{\frac{1}{\lambda}}S_\kappa(\mu(t))^{-\frac{p}{p-1}}
\left(\int_{\partial U_t^e} d\sigma\right)
\left(\int_0^{\mu(t)} f^*(s)\,ds\right) ^\frac{1}{p-1}
\\
&\le
\frac{\alpha_q}{\beta}
|\Omega|^{\frac{1}{\lambda}}S_\kappa(|\Omega|)^{-\frac{p}{p-1}}
\left(\int_0^{|\Omega|} f^*(s)\,ds\right)^{\frac{1}{p-1}}dt.
\end{align*}
Therefore, we have
\begin{align}\label{eq:3.5}
\int_0^\tau t^{q-1}\,\mu(t)^{\frac{1}{\lambda}}\,dt
&\le
\int_0^\tau
-t^{q-1}\,\mu'(t)\,\mu(t)^{\frac{1}{\lambda}}S_\kappa(\mu(t))^{-\frac{p}{p-1}}
\int_0^{\mu(t)} f^*(s)\,ds\,dt
\notag\\
&\quad
+\frac{\alpha_q}{\beta}
|\Omega|^{\frac{1}{\lambda}}S_\kappa(|\Omega|)^{-\frac{p}{p-1}}
\left(\int_0^{|\Omega|} f^*(s)\,ds\right)^{\frac{1}{p-1}}.
\end{align}
Define
\[
F_\kappa(l)
:=
\int_0^l
w^{\frac{1}{\lambda}}S_\kappa(w)^{-\frac{p}{p-1}}
\left(\int_0^w f^*(s)\,ds\right)^\frac{1}{p-1}\,dw.
\]
An integration by parts yields
\begin{align*}
\tau^{q-1}\left(
  \int_0^\tau \mu(t)^\frac{1}{\lambda}dt + F_\kappa(\mu(\tau))
\right)
&\le (q-1)\int_0^\tau t^{q-2}\left(\int_0^t \mu(r)^\frac1\lambda dr +F_\kappa(\mu(t))\right)dt\\
& +\frac{\alpha_q}{\beta}
|\Omega|^{\frac{1}{\lambda}}S_\kappa(|\Omega|)^{-\frac{p}{p-1}}
\left(\int_0^{|\Omega|} f^*(s)\,ds\right)^{\frac{1}{p-1}}.
\end{align*}
We now apply Gronwall's inequality, taking
\begin{align*}
\zeta(\tau) = \int_0^\tau t^{q-2}\left(\int_0^t \mu(r)^\frac1\lambda dr +F_\kappa(\mu(t))\right)dt, \\
C= \frac{\alpha_q}{\beta}
|\Omega|^{\frac{1}{\lambda}}S_\kappa(|\Omega|)^{-\frac{p}{p-1}}
\left(\int_0^{|\Omega|} f^*(s)\,ds\right)^{\frac{1}{p-1}}. 
\end{align*}
By choosing $\tau=v_m$, we obtain
\begin{align*}
  \tau^{q-2}\left(
\int_0^\tau \mu(t)^{\frac{1}{\lambda}}\,dt + F_\kappa(\mu(\tau))\right)
\le
\frac{(q-1)\zeta(v_m)+C}{v_m}\left(\frac{\tau}{v_m}\right)^{q-2}
\end{align*}
Observe that
\begin{align*}
\zeta(v_m)
=
\int_0^{v_m} t^{q-2}\left(\int_0^t \mu(r)^\frac1\lambda dr +F_\kappa(\mu(t))\right)dt
\end{align*}
Similarly, we multiply both sides of \eqref{eq:3.2} by
$t^{q-1}\varphi(t)\tilde{S}_{\kappa}(\varphi(t))^{-\frac{p}{p-1}}$
and integrate the resulting identity over $(0,\tau)$, where
$\tau\ge v_m$. This yields
\begin{align*}
\int_0^\tau t^{q-1}\,&\varphi(t)^{\frac{1}{\lambda}}\,dt\\
&=
\theta_\kappa^{-\frac{1}{q-1}}
\int_0^\tau
-t^{q-1}\,\varphi'(t)\,\varphi(t)^{\frac{1}{\lambda}}\tilde{S_\kappa}(\varphi(t))^{-\frac{p}{p-1}}
\left(\int_0^{\theta_\kappa\varphi(t)} f^*(s)\,ds\right)^\frac{1}{p-1}\,dt
\notag\\
&\quad
+\frac{\theta_\kappa^{-\frac1{q-1}}}{\beta^\sharp}
\int_0^\tau
t^{q-1}\,\varphi(t)^{\frac{1}{\lambda}}\tilde{S_\kappa}(\varphi(t))^{-\frac{p}{p-1}}
\left(\int_{\partial V_t^e} d\sigma\right)
\left(\int_0^{\varphi(t)} f^*(s)\,ds\right) ^\frac{1}{p-1}dt.
\end{align*}
By the definition of $\tilde{S}_{\kappa}(l)$, 
the above expression can be rewritten as
\begin{align*}
\int_0^\tau t^{q-1}\,&\varphi(t)^{\frac{1}{\lambda}}\,dt\\
&=
\theta_\kappa
\int_0^\tau
-t^{q-1}\,\varphi'(t)\,\varphi(t)^{\frac{1}{\lambda}}{S_\kappa}(\theta_\kappa\varphi(t))^{-\frac{p}{p-1}}
\left(\int_0^{\theta_\kappa\varphi(t)} f^*(s)\,ds\right)^{\frac{1}{p-1}}\,dt
\notag\\
&\quad
+\frac{\theta_\kappa}{\beta^\sharp}
\int_0^\tau
t^{q-1}\,\varphi(t)^{\frac{1}{\lambda}}{S_\kappa}(\theta_\kappa\varphi(t))^{-\frac{p}{p-1}}
\left(\int_{\partial V_t^e} d\sigma\right)
\left(\int_0^{\theta_\kappa\varphi(t)} f^*(s)\,ds\right) ^\frac{1}{p-1}dt.
\end{align*}
Hence by Lemma~\ref{lem:3.1}, we have
\begin{align}\label{eq:3.6}
\int_0^\tau t^{q-1}\,\varphi(t)^{\frac{1}{\lambda}}\,dt
&=
\theta_\kappa^{-\frac{1}{\lambda}}
\int_0^\tau
-t^{q-1}\,(F_\kappa(\theta_\kappa\varphi(t)))'\,dt
\notag\\
&\quad
+\theta_\kappa^{1-\frac{1}{\lambda}}\frac{\alpha_q^\sharp}{\beta^\sharp}
|\Omega|^{\frac{1}{\lambda}}S_\kappa(|\Omega|)^{-\frac{p}{p-1}}
\left(\int_0^{|\Omega|} f^*(s)\,ds\right)^{\frac{1}{p-1}}.
\end{align}
An integration by parts yields
\begin{align*}
\tau^{q-1}\Biggl(
  \int_0^\tau &(\theta_\kappa\varphi(t))^\frac{1}{\lambda}dt + F_\kappa(\theta_\kappa\varphi(\tau))
\Biggr)\\
&= (q-1)\int_0^\tau t^{q-2}\left(\int_0^t (\theta_\kappa\varphi(r))^\frac1\lambda dr +F_\kappa(\theta_\kappa\varphi(t))\right)dt\\
 &\qquad\qquad\qquad+\theta_\kappa\frac{\alpha_q^\sharp}{\beta^\sharp}
|\Omega|^{\frac{1}{\lambda}}S_\kappa(|\Omega|)^{-\frac{p}{p-1}}
\left(\int_0^{|\Omega|} f^*(s)\,ds\right)^{\frac{1}{p-1}}. 
\end{align*}
We now apply Gronwall's inequality again with
\begin{align*}
\zeta(\tau) = \int_0^\tau t^{q-2}\left(\int_0^t (\theta_\kappa\varphi(r))^\frac1\lambda dr +F_\kappa(\theta_\kappa\varphi(t))\right)dt, \\
C^\sharp= \theta_\kappa\frac{\alpha_q^\sharp}{\beta^\sharp}
|\Omega|^{\frac{1}{\lambda}}S_\kappa(|\Omega|)^{-\frac{p}{p-1}}
\left(\int_0^{|\Omega|} f^*(s)\,ds\right)^{\frac{1}{p-1}}. 
\end{align*}
In view of \eqref{eq:1.7}, we obtain
\[
C=C^\sharp.
\]
Furthermore,
\begin{align*}
 \tau^{q-2}\left(\int_0^\tau (\theta_\kappa\varphi(r))^\frac1\lambda dr 
+F_\kappa(\theta_\kappa\varphi(t))\right)dt
=
\frac{(q-1)\zeta(v_m)+C}{v_m}\left(\frac{\tau}{v_m}\right)^{q-2}
\end{align*}
At this point,
\begin{align*}
  \zeta(v_m)
  =
\int_0^{v_m} t^{q-2}\left(\int_0^t (\theta_\kappa\varphi(r))^\frac1\lambda dr 
+F_\kappa(\theta_\kappa\varphi(t))\right)dt
\end{align*}
We obtain the following inequality by inequality \eqref{eq:3.3}:
\begin{align*}
  \int_0^{v_m} t^{q-2}\left(\int_0^t \mu(r)^\frac1\lambda dr +F_\kappa(\mu(t))\right)dt
  \le
    \int_0^{v_m} t^{q-2}\left(\int_0^t (\theta_\kappa\varphi(r))^\frac1\lambda dr 
    +F_\kappa(\theta_\kappa\varphi(t))\right)dt
\end{align*}
Summarizing the above arguments, we obtain

\[
F_\kappa(\mu(\tau)) + \int_0^\tau \mu(t)^{\frac{1}{\lambda}}\,dt
\le
F_\kappa(\theta_\kappa\varphi(\tau)) + \int_0^\tau (\theta_\kappa\varphi(t))^{\frac{1}{\lambda}}\,dt.
\]

Letting $\tau\to\infty$ yields
\[
\int_0^\infty \mu(t)^{\frac{1}{\lambda}}\,dt
\le
\theta_\kappa^{\frac{1}{\lambda}}
\int_0^\infty \varphi(t)^{\frac{1}{\lambda}}\,dt,
\]
which implies
\[
\|u\|_{L^{\lambda,1}(\Omega)}
\le
\theta_\kappa^{\frac{1}{\lambda}}
\|v\|_{L^{\lambda,1}(\Omega^\sharp)}.
\]
We now prove inequalities \eqref{eq:1.9}, \eqref{eq:1.10}and \eqref{eq:1.11}. These can be rewritten as
\[
\int_0^\infty t^{q-1}\,\mu(t)^{\frac{1}{\lambda}}\,dt
\le
\int_0^\infty t^{q-1}\,(\theta_\kappa \varphi(t))^{\frac{1}{\lambda}}\,dt.
\]
\noindent
Integrate \eqref{eq:3.5} by parts on the right with the first term and letting $\tau\to\infty$, we obtain
\begin{align*}
\int_0^\infty t^{q-1}\,\mu(t)^{\frac{1}{\lambda}}\,dt
\le
(q-1)
\int_0^\infty t^{q-2} &F_\kappa(\mu(t))\,dt
\\+\frac{\alpha_q}{\beta}&
|\Omega|^{\frac{1}{\lambda}}S_\kappa(|\Omega|)^{-\frac{p}{p-1}}
\left(\int_0^{|\Omega|} f^*(s)\,ds\right)^{\frac{1}{p-1}}.
\end{align*}
\noindent
On the other hand,
\begin{align*}
\int_0^\infty t^{q-1}\,(\theta_\kappa \varphi(t))^{\frac{1}{\lambda}}\,dt
=
(q-1)
\int_0^\infty t^{q-2}&F_\kappa(\theta_\kappa \varphi(t))\,dt
\\+
\theta_\kappa &\frac{\alpha_q^\sharp}{\beta^\sharp}
|\Omega|^{\frac{1}{\lambda}}S_\kappa(|\Omega|)^{-\frac{p}{p-1}}
\left(\int_0^{|\Omega|} f^*(s)\,ds\right)^{\frac{1}{p-1}}.
\end{align*}

Therefore, it suffices to prove
\[
\int_0^\infty t^{q-2}F_\kappa(\mu(t))\,dt
\le
\int_0^\infty t^{q-2}F_\kappa(\theta_\kappa \varphi(t))\,dt.
\]

To this end, we multiply inequality \eqref{eq:3.1} by
$t^{q-1} F_\kappa(\mu(t)) S_\kappa(\mu(t))^{-\frac{p}{p-1}}$ and integrate over $[0,\tau]$ with $\tau \ge v_m$, obtaining
\begin{align*}
\int_0^\tau t^{q-1} &F_\kappa(\mu(t))\,dt\\
&\le
\int_0^\tau
-t^{q-1}\,\mu'(t)\,F_\kappa(\mu(t))S_\kappa(\mu(t))^{-\frac{p}{p-1}}
\left(\int_0^{\mu(t)} f^*(s)\,ds\right)^\frac{1}{p-1}\,dt
\\
&\quad
+\frac{1}{\beta}
\int_0^\tau
t^{q-1} F_\kappa(\mu(t))S_\kappa(\mu(t))^{-\frac{p}{p-1}}
\left(\int_{\partial U_t^e} d\sigma\right)
\left(\int_0^{\mu(t)} f^*(s)\,ds\right) ^\frac{1}{p-1}dt.
\end{align*}

We can prove that 
$l \mapsto F_0(l)S_0(l)^{-\frac{p}{p-1}}$ is non-decreasing for
$\lambda\in\left(0,\frac{n(p-1)}{p(n-2)+n}\right]$, and $ l \mapsto F_1(l)S_1(l)^{-\frac{p}{p-1}}$ is non-decreasing 
for $ 0 < \lambda \le \frac{n(p-1)}{(p+1)(n-1)}$, and we also deffer this prove in Appendix~\ref{app:1} for concise.\par

Since $F_\kappa(l)S_\kappa(l)^{-\frac{p}{p-1}}$ is non-decreasing, we have
\begin{align*}
\int_0^\tau t^{q-1} F_\kappa(\mu(t))\,dt
\le
-\int_0^\tau t^{q-1}\,&dH_\kappa(\mu(t))
\\+\frac{\alpha_q}{\beta}&
F_\kappa(|\Omega|)S_\kappa(|\Omega|)^{-\frac{p}{p-1}}
\left(\int_0^{|\Omega|} f^*(s)\,ds\right)^{\frac{1}{p-1}},
\end{align*}
where
\[
H_\kappa(l)
:=
\int_0^l
F_\kappa(w)S_\kappa(w)^{-\frac{p}{p-1}}
\left(\int_0^w f^*(s)\,ds\right)^\frac{1}{p-1}\,dw.
\]
Integrating by parts and applying Gronwall's Inequality with
\begin{align*}
\zeta(\tau) = \int_0^\tau \int_0^t r^{q-2} F_\kappa(\mu(r))\,dr + \int_0^\tau H_\kappa(\mu(r)) \,dr\\
C = \frac{\alpha_q}{\beta}
F_\kappa(|\Omega|)S_\kappa(|\Omega|)^{-\frac{p}{p-1}}
\left(\int_0^{|\Omega|} f^*(s)\,ds\right)^{\frac{1}{p-1}},
\end{align*}

we have
   \begin{align*}
\tau\left(\int_0^\tau  t^{q-2}F_\kappa(\mu(t))\,dt + H_\kappa(\mu(\tau))\right)
\le
\frac{(q-1)\zeta(v_m)+C}{v_m}\left(\frac{\tau}{v_m}\right)^{q-2}
.
\end{align*}
Note that, by \eqref{eq:3.3} we have:
\begin{align*}
\zeta(v_m)&=\int_0^{v_m}\int_0^t r^{q-2}F_\kappa(\mu(r))dr + \int_0^{v_m} H_\kappa(\mu(r)) dr
\\
&\le\int_0^{v_m}\int_0^t r^{q-2}F_\kappa(\theta_\kappa\varphi(r))dr + \int_0^{v_m} H_\kappa(\theta_\kappa\varphi(r)) dr
\end{align*}
Similarly, using \eqref{eq:3.2}, we obtain
\begin{align*}
\int_0^\tau t^{q-1} F_\kappa(\theta_\kappa \varphi(t))\,dt
=
-\int_0^\tau t^{q-1}\,&dH_\kappa(\theta_\kappa \varphi(t))
\\+\theta_\kappa\frac{\alpha_q^\sharp}{\beta^\sharp}&
F_\kappa(|\Omega|)S_\kappa(|\Omega|)^{-\frac{p}{p-1}}
\left(\int_0^{|\Omega|} f^*(s)\,ds\right)^{\frac{1}{p-1}}.
\end{align*}
Applying Gronwall's inequality and combining the above estimates, we obtain
\[
\int_0^\tau t^{q-2} F_\kappa(\mu(t))\,dt + H_\kappa(\mu(\tau))
\le
\int_0^\tau t^{q-2} F_\kappa(\theta_\kappa \varphi(t))\,dt + H_\kappa(\theta_\kappa \varphi(\tau)).
\]

Letting $\tau\to\infty$, we conclude the desired inequality.
\end{proof}
\begin{proof}[Proof of Theorem \ref{thm:1.3}]
  For the special case $f\equiv 1$, we first observe that
\[
\int_0^{\mu(t)} f\, ds = \mu(t), 
\qquad 
\int_0^{\theta_\kappa \varphi(t)} f\, ds = \theta_\kappa \varphi(t).
\]
Hence, equations \eqref{eq:3.2} and \eqref{eq:3.3} reduce to
\begin{align}
S_\kappa(\mu(t))^{\frac{p}{p-1}}
&\le
\mu(t)^{\frac{1}{p-1}}
\left(-\mu'(t)+\frac{1}{\beta}\int_{\partial U_t^e} d\sigma\right),
\label{eq:3.7}
\\
\tilde S_\kappa(\varphi(t))^{\frac{p}{p-1}}
&=
\varphi(t)^{\frac{1}{p-1}}
\left(-\varphi'(t)+\frac{1}{\beta^\sharp}\int_{\partial V_t^e} d\sigma\right),
\label{eq:3.8}
\end{align}
For \eqref{eq:3.7}, we multiply both sides by
$t^{q-1} S_\kappa(\mu(t))^{-\frac{p}{p-1}}$
and integrate over $(0,\tau)$, with $\tau \ge v_m$. This yields
\begin{align*}
\int_0^\tau t^{q-1}\,dt
&\le
\int_0^\tau (-\mu'(t))\, t^{q-1}\mu(t)^{\frac{1}{p-1}}
S_\kappa(\mu(t))^{-\frac{p}{p-1}}\,dt
\nonumber\\
&\quad +
\frac{1}{\beta}
\int_0^\tau t^{q-1}\mu(t)^{\frac{1}{p-1}}
S_\kappa(\mu(t))^{-\frac{p}{p-1}}
\int_{\partial U_t^e} d\sigma\, dt.
\end{align*}

Since the function
$l \mapsto l^{\frac{1}{p-1}} S_\kappa(l)^{-\frac{p}{p-1}}$
is nondecreasing for $1 \le p \le \frac{n}{n-1}$, we deduce that
\begin{align*}
\int_0^\tau t^{q-1}\,dt
\le
\int_0^\tau (-\mu'(t))\, t^{q-1}\mu(t)^{\frac{1}{p-1}}
S_\kappa(\mu(t))^{-\frac{p}{p-1}}dt
+
\frac{\alpha_q}{\beta}
|\Omega|^{\frac{1}{p-1}}
S_\kappa(|\Omega|)^{-\frac{p}{p-1}}.
\end{align*}
We apply the same argument to \eqref{eq:3.8}. Consequently, we obtain
\begin{align*}
\int_0^\tau t^{q-1}\,dt
=
\int_0^\tau (-\varphi'(t))\, t^{q-1}\varphi(t)^{\frac{1}{p-1}}
\tilde{S}_\kappa(\varphi(t))^{-\frac{p}{p-1}}\,dt
+\theta_\kappa\frac{\alpha_q^\sharp}{\beta^\sharp}
|\Omega|^{\frac{1}{p-1}}
S_\kappa(|\Omega|)^{-\frac{p}{p-1}}.
\end{align*}

Combining the above inequality with the corresponding estimate for $\mu(t)$, we deduce that
\begin{align*}
\int_0^\tau (-\mu'(t))\,t^{q-1}\mu(t)^{\frac{1}{p-1}}
S_\kappa(\mu(t))^{-\frac{p}{p-1}}\,dt
\ge
\int_0^\tau (-\varphi'(t))\,t^{q-1}\varphi(t)^{\frac{1}{p-1}}
\tilde{S}_\kappa(\varphi(t))^{-\frac{p}{p-1}}\,dt.
\end{align*}

An integration by parts yields
\begin{align*}
&-\tau^{p-1}\int_0^{\mu(\tau)} l^{\frac{1}{p-1}} S_\kappa(l)^{-\frac{1}{p-1}}\,dl
+ (p-1)\int_0^\tau t^{p-2}\int_0^{\mu(t)} l^{\frac{1}{p-1}} S_\kappa(l)^{-\frac{p}{p-1}}\,dl\,dt \\
&\ge
-\tau^{p-1}\int_0^{\theta_\kappa \varphi(\tau)} l^{\frac{1}{p-1}} S_\kappa(l)^{-\frac{1}{p-1}}\,dl
+ (p-1)\int_0^\tau t^{p-2}\int_0^{\theta_\kappa \varphi(t)} l^{\frac{1}{p-1}} {S}_\kappa(l)^{-\frac{p}{p-1}}\,dl\,dt.
\end{align*}
We apply Gronwall's inequality once again, with
\[
\zeta(\tau)
=
\int_0^\tau t^{p-2}
\int_{\theta_\kappa \varphi(t)}^{\mu(t)}
l^{\frac{1}{p-1}} S_\kappa(l)^{-\frac{p}{p-1}}\,dl\,dt,
\qquad C=0.
\]
It follows that
\begin{align*}
\tau^{p-2}
\int_{\theta_\kappa \varphi(\tau)}^{\mu(\tau)}
l^{\frac{1}{p-1}} S_\kappa(l)^{-\frac{p}{p-1}}\,dl
\le
\frac{(q-1)\zeta(v_m)}{v_m}
\left(\frac{\tau}{v_m}\right)^{q-2}.
\end{align*}

Since $\zeta(v_m)\le 0$ and the above inequality holds for every $\tau \ge v_m$, we deduce that
\[
\int_{\theta_\kappa \varphi(\tau)}^{\mu(\tau)}
l^{\frac{1}{p-1}} S_\kappa(l)^{-\frac{p}{p-1}}\,dl
\le 0,
\qquad \forall\, \tau \ge v_m.
\]

On the other hand, since
$l^{\frac{1}{p-1}} S_\kappa(l)^{-\frac{p}{p-1}} > 0$
in the integration range, it follows that
\[
\mu(\tau) \le \theta_\kappa \varphi(\tau),
\qquad \forall\, \tau \ge v_m.
\]

Combining this with the previous estimate, we conclude that
\[
\mu(t) \le \theta_\kappa \varphi(t), \qquad \forall\, t>0,
\]
and therefore \eqref{eq:1.11} holds.
\par
We now turn to the proof of \eqref{eq:1.13} and \eqref{eq:1.14}. 
\par
Multiplying \eqref{eq:3.7} by
$t^{q-1} S_\kappa(\mu(t))^{-\frac{p}{p-1}}$
and integrating over $(0,\tau)$, with $\tau \ge v_m$, we obtain
\begin{align*}
\int_0^\tau t^{q-1}\,\mu(t)^{\frac{1}{\lambda}}\,dt
&\le
\int_0^\tau -t^{q-1}\,\mu'(t)\,\mu(t)^{\frac{1}{p-1}+\frac{1}{\lambda}}S_\kappa(\mu(t))^{-\frac{p}{p-1}}\,dt
\notag\\
&\quad
+\frac{1}{\beta}
\int_0^\tau t^{q-1}\,\mu(t)^{\frac{1}{p-1}+\frac{1}{\lambda}}S_\kappa(\mu(t))^{-\frac{p}{p-1}}
\left(\int_{\partial U_t^e} d\sigma\right) dt.
\end{align*}

Using the monotonicity of $l^{\frac{1}{p-1}+\frac{1}{\lambda}}S_\kappa(l)^{-\frac{p}{p-1}}$ for $\kappa=0,1$ when $0 < \lambda\le \frac{n(p-1)}{n(p-1)-p}$. We can obtain that
\begin{align}\label{eq:3.13}
\int_0^\tau t^{q-1}\,(\mu(t))^{\frac{1}{\lambda}}\,dt
\le
-\int_0^\tau t^{q-1}\,dF_\kappa((\mu(t)))
+\frac{\alpha_q}{\beta}
|\Omega|^{1+\frac{1}{\lambda}}S_\kappa(|\Omega|)^{-\frac{p}{p-1}}.
\end{align}
Proceeding as in the case of \eqref{eq:3.7}, we consider \eqref{eq:3.8}. Multiplying both sides by
\[
t^{q-1}\tilde{S}_\kappa(\varphi(t))^{-\frac{p}{p-1}}
\]
and integrating over $(0,\tau)$, $\tau \ge v_m$, we obtain
\begin{align}\label{eq:3.14}
\int_0^\tau t^{q-1}\,(\theta_\kappa\varphi(t))^{\frac{1}{\lambda}}\,dt
=
-\int_0^\tau t^{q-1}\,dF_\kappa((\theta_\kappa\varphi(t)))
+\frac{\alpha_q}{\beta}
|\Omega|^{1+\frac{1}{\lambda}}S_\kappa(|\Omega|)^{-\frac{p}{p-1}}.
\end{align}
Following the similar arguments in proof of \eqref{eq:1.8}, we are able to prove \eqref{eq:1.13}.
\par
We now turn to the proof of \eqref{eq:1.14}. It suffices to show that
\begin{align*}
\int_0^\infty t^{q-1}\,\mu(t)^{\frac{1}{\lambda}}\,dt
\le
\int_0^\infty t^{q-1}\,(\theta_0\varphi(t))^{\frac{1}{\lambda}}\,dt.
\end{align*}

To this end, Integrate \eqref{eq:3.13} and \eqref{eq:3.14} by parts on the right with the first term and letting $\tau\to\infty$. This reduces the problem to proving the equivalent inequality
\[
\int_0^\infty t^{q-1}F_0(\mu(t))\,dt
\le
\int_0^\infty t^{q-1}F_0(\theta_0 \varphi(t))\,dt.
\]
We multiply both sides of \eqref{eq:3.7} by
$t^{q-1} F_0(\mu(t)) S_0(\mu(t))^{\frac{1}{p-1}}$
and integrate over $(0,\tau)$, where $\tau \ge v_m$. This yields
\begin{align*}
\int_0^\tau t^{q-1} F_0(\mu(t))\,dt
&\le
\int_0^\tau -t^{q-1}\mu'(t)\, F_0(\mu(t)) S_0(\mu(t))^{-\frac{1}{p-1}}
\mu(t)^{\frac{1}{p-1}}\,dt \\
&\quad +
\frac{1}{\beta}
\int_0^\tau t^{q-1}\mu(t)^{\frac{1}{p-1}} F_0(\mu(t))
S_0(\mu(t))^{-\frac{p}{p-1}}
\int_{\partial U_t^e} d\sigma\, dt.
\end{align*}

Since the function
\[
l \mapsto l^{\frac{1}{p-1}} F_0(l) S_0(l)^{-\frac{p}{p-1}}
\]
is nondecreasing for
$0 < \lambda \le \frac{n(p-1)}{n(p-1)-p}$,
we deduce that
\begin{align*}
\int_0^\tau t^{q-1} F_0(\mu(t))\,dt
&\le
\int_0^\tau -t^{q-1}\mu'(t)\, F_0(\mu(t)) S_0(\mu(t))^{-\frac{1}{p-1}}
\mu(t)^{\frac{1}{p-1}}\,dt \\
&\quad +
\frac{\alpha_q}{\beta}
|\Omega|^{-\frac{1}{p-1}}
F_0(|\Omega|)
S_0(|\Omega|)^{-\frac{p}{p-1}}.
\end{align*}
We next apply a similar argument to \eqref{eq:3.8}. Multiplying both sides of \eqref{eq:3.8} By
\[
t^{q-1} F_0(\theta_0 \varphi(t)) \tilde{S}_0(\varphi(t))^{-\frac{p}{p-1}}
\]
and integrating over $(0,\tau)$, with $\tau \ge v_m$, we obtain
\begin{align*}
\int_0^\tau t^{q-1} F_0(\theta_0\varphi(t))\,dt
&=
\int_0^\tau -t^{q-1}\varphi'(t)\,F_0(\theta_0\varphi(t))
\tilde{S}_0(\varphi(t))^{-\frac{1}{p-1}}
\varphi(t)^{\frac{1}{p-1}}\,dt \\
&\quad +
\frac{1}{\beta^\sharp}
\int_0^\tau t^{q-1}\varphi(t)^{\frac{1}{p-1}}
F_0(\theta_0\varphi(t))
\tilde{S}_0(\varphi(t))^{-\frac{p}{p-1}}
\int_{\partial V_t^e} d\sigma\,dt.
\end{align*}

This can be rewritten as
\begin{align*}
\int_0^\tau t^{q-1} F_0(\theta_0\varphi(t))\,dt
&=
\int_0^\tau -t^{q-1}\varphi'(t)\,F_0(\theta_0\varphi(t))
\tilde{S}_0(\varphi(t))^{-\frac{1}{p-1}}
\varphi(t)^{\frac{1}{p-1}}\,dt \\
&\quad +
\theta_0
\frac{\alpha_q^\sharp}{\beta^\sharp}
|\Omega|^{-\frac{1}{p-1}}
F_0(|\Omega|)
S_0(|\Omega|)^{-\frac{p}{p-1}}.
\end{align*}

We now return to the proof of \eqref{eq:1.9}. Arguing in the same way as above, we conclude that \eqref{eq:1.14} holds.
\end{proof}
\begin{remark}
 One may ask whether the constraints imposed in the comparison procedure are optimal. 
The Neumann case considered here indicates that weaker assumptions may still suffice to establish Talenti-type inequalities. 
Identifying the minimal set of assumptions under which such comparison results remain valid appears to be an interesting open problem.
\end{remark}

\appendix
\section*{Appendix}

\section{Proof of the monotonicity properties}
\label{app:1}
\begin{proof}[Proof of the monotonicity of $l^{1/\lambda}S_\kappa(l)^{-\frac{p}{p-1}}$]
For $\kappa=0$, define
\[
C_0
:=
n^{-\frac{p}{p-1}}
|B^n|^{-\frac{p}{n(p-1)}}
\theta_0^{-\frac{p}{n(p-1)}},
\] 
and we have
\[
l^{1/\lambda}S_0(l)^{-\frac{p}{p-1}}
= C_0\,
l^{\frac{1}{\lambda}-\frac{p(n-1)}{n(p-1)}},
\]
which is non-decreasing provided that $0 < \lambda\le \frac{n(p-1)}{p(n-1)}$.

For $\kappa=1$, we compute
\begin{align}\label{eq:derivative-FS}
\left(l^{1/\lambda}S_1(l)^{-\frac{p}{p-1}}\right)'
&=
\frac1\lambda
l^{\frac1\lambda-1}
S_1(l)^{-\frac{p}{p-1}}
-\frac{p}{p-1}
l^{\frac1\lambda}
S_1'(l)
S_1(l)^{-\frac{p}{p-1}-1}
\\\notag
&=
l^{\frac1\lambda-1}
S_1(l)^{-\frac{p}{p-1}-1}
\left[
\frac1\lambda S_1(l)
-\frac{p}{p-1}lS_1'(l)
\right]
\\\notag
&=
l^{\frac1\lambda-1}
S_1(l)^{-\frac{p}{p-1}-1}
\Biggl[
\frac1\lambda
n|B^n|\theta_1
\sin^{n-1}r
\\\notag
&\qquad
-\frac{p}{p-1}
\Bigl(
n|B^n|\theta_1
\int_0^r\sin^{n-1}s\,ds
\Bigr)
\frac{(n-1)\cos r}{\sin r}
\Biggr]
\\\notag
&=
l^{\frac1\lambda-1}
S_1(l)^{-\frac{p}{p-1}-1}
\bigl(n|B^n|\theta_1\sin^{n-1}r\bigr)
\\\notag
&\qquad\times
\left[
\frac1\lambda
-\frac{p(n-1)}{p-1}
\frac{\cos r}{\sin^n r}
\int_0^r\sin^{n-1}s\,ds
\right].
\end{align}
Since for $0 \le r \le \pi$
\[
\frac{\cos r}{\sin^n r}
\int_0^r \sin^{n-1}s\,ds
\le \frac1n,
\]
we deduce that
\[
\left(l^{1/\lambda}S_1(l)^{-\frac{p}{p-1}}\right)'
\ge
A(l)
\left(
\frac1\lambda
-\frac{p(n-1)}{n(p-1)}
\right),
\]
where \(A(l)>0\). Hence,
\[
\left(l^{1/\lambda}S_1(l)^{-\frac{p}{p-1}}\right)'\ge0
\]
provided that
\[
0<\lambda\le \frac{n(p-1)}{p(n-1)}.
\]
Therefore,
\[
l^{1/\lambda}S_1(l)^{-\frac{p}{p-1}}
\]
is nondecreasing.
\end{proof}
\begin{proof}[Proof of the monotonicity of $F_\kappa(l)S_\kappa(l)^{-\frac{p}{p-1}}$]
Since
\[
S_0(l)^{-\frac{p}{p-1}}
=
C_0\,l^{-\frac{p(n-1)}{n(p-1)}},
\]
and
\[
F_0(l)
=
C_0
\int_0^l
w^{\frac1\lambda-\frac{p(n-1)}{n(p-1)}}
\left(
\int_0^w f^*(s)\,ds
\right)^{\frac1{p-1}}
dw,
\]
it follows that
\[
F_0(l)S_0(l)^{-\frac{p}{p-1}}
=
C_0^2
\left(
\int_0^l
w^{\frac1\lambda-\frac{p(n-1)}{n(p-1)}}
\left(
\int_0^w f^*(s)\,ds
\right)^{\frac1{p-1}}
dw
\right)
l^{-\frac{p(n-1)}{n(p-1)}}.
\]
We compute that
\begin{align*}
&
\left(
\left(
\int_0^l w^{\frac{1}{\lambda}-\frac{p(n-1)}{n(p-1)}}
\left(\int_0^w f^*(s)\,ds\right)^\frac{1}{p-1}dw
\right)
l^{-\frac{p(n-1)}{n(p-1)}}
\right)' \\
&=l^{\frac{1}{\lambda}-\frac{p(n-1)}{n(p-1)}}\left(\int_0^w f^*(s)\,ds\right)^\frac{1}{p-1}
-\frac{p(n-1)}{n(p-1)}l^{-\frac{p(n-1)}{n(p-1)}-1}\\
&\qquad\qquad\qquad\qquad\times\left(
\int_0^l w^{\frac{1}{\lambda}-\frac{p(n-1)}{n(p-1)}}
\left(\int_0^w f^*(s)\,ds\right)^\frac{1}{p-1}dw
\right)\\
&\ge \left(\int_0^w f^*(s)\,ds\right)^\frac{1}{p-1} l^{-1-\frac{2p(n-1)}{n(p-1)}}
\\
&\qquad\qquad\qquad\qquad\times
\left[ l^{\frac{1}{\lambda}+1}- \frac{p(n-1)}{n(p-1)}l^{\frac{p(n-1)}{n(p-1)}}
(\frac{1}{\lambda}-\frac{p(n-1)}{n(p-1)}+1)^{-1}l^{\frac{1}{\lambda}-\frac{p(n-1)}{n(p-1)}+1}
                       \right]\\
&\ge (\frac{1}{\lambda}-\frac{p(n-1)}{n(p-1)}+1)^{-1}\left(\int_0^w f^*(s)\,ds\right)^\frac{1}{p-1} l^{\frac{1}{\lambda}-\frac{2p(n-1)}{n(p-1)}}\left[ \frac{1}{\lambda}-\frac{2p(n-1)}{n(p-1)} +1  \right]
.
\end{align*}
Consequently, the above expression is non-decreasing when
$
0 < \lambda < \frac{n(p-1)}{p(n-2)+n}.
$\par
For $\kappa=1$, by the definitions of $F_1$ and $S_1$, a direct computation yields
\begin{align*}
\left(F_1(l)S_1(l)^{-\frac{p}{p-1}}\right)'
&=
F_1'(l)S_1(l)^{-\frac{p}{p-1}}
-\frac{p}{p-1}
F_1(l)S_1(l)^{-\frac{p}{p-1}-1}S_1'(l)
\nonumber\\
&=
S_1(l)^{-\frac{p}{p-1}-1}
\left(
F_1'(l)S_1(l)
-\frac{p}{p-1}F_1(l)S_1'(l)
\right).
\end{align*}
\noindent
It therefore suffices to study the sign of
\[
F_1'(l)S_1(l)
-\frac{p}{p-1}F_1(l)S_1'(l).
\]
\noindent
Differentiating once more, we obtain
\begin{align*}
&
\left(
F_1'(l)S_1(l)
-\frac{p}{p-1}F_1(l)S_1'(l)
\right)'
\\
&=
F_1''(l)S_1(l)
+F_1'(l)S_1'(l)
-\frac{p}{p-1}F_1'(l)S_1'(l)
-\frac{p}{p-1}F_1(l)S_1''(l)
\\
&=
F_1''(l)S_1(l)
-\frac{1}{p-1}F_1'(l)S_1'(l)
-\frac{p}{p-1}F_1(l)S_1''(l).
\end{align*}
\noindent
Since
\[
S_1''(l)
=
\frac{d}{dl}
\left(
\frac{n-1}{\tan\bigl(L_1^{-1}(l)\bigr)}
\right)
\le 0,
\]
\noindent
Consequently,
\[
\left(
F_1'(l)S_1(l)
-\frac{p}{p-1}F_1(l)S_1'(l)
\right)'
\ge
F_1''(l)S_1(l)
-\frac{1}{p-1}F_1'(l)S_1'(l).
\]
\begin{align*}
F_1''(l)
&=
\frac1\lambda
l^{\frac1\lambda-1}
S_1(l)^{-\frac{p}{p-1}}
\left(
\int_0^l f^*(s)\,ds
\right)^{\frac1{p-1}}
\\
&\quad
-\frac{p}{p-1}
l^{\frac1\lambda}
S_1(l)^{-\frac{p}{p-1}-1}
S_1'(l)
\left(
\int_0^l f^*(s)\,ds
\right)^{\frac1{p-1}}
\\
&\quad
+\frac1{p-1}
l^{\frac1\lambda}
S_1(l)^{-\frac{p}{p-1}}
\left(
\int_0^l f^*(s)\,ds
\right)^{\frac1{p-1}-1}
f^*(l).
\end{align*}
Substituting the above expression for $F_1''$ into the previous inequality, we obtain
\begin{align*}
&
F_1''(l)S_1(l)
-\frac1{p-1}F_1'(l)S_1'(l)
\\
&=
\left(
\int_0^l f^*(s)\,ds
\right)^{\frac1{p-1}}
\Biggl[
\frac1\lambda
l^{\frac1\lambda-1}
S_1(l)^{-\frac{p}{p-1}+1}
\\
&\qquad
-\frac{p}{p-1}
l^{\frac1\lambda}
S_1(l)^{-\frac{p}{p-1}}
S_1'(l)
-\frac1{p-1}
l^{\frac1\lambda}
S_1(l)^{-\frac{p}{p-1}}
S_1'(l)
\Biggr]
\\
&\qquad
+\frac1{p-1}
l^{\frac1\lambda}
S_1(l)^{-\frac{p}{p-1}+1}
\left(
\int_0^l f^*(s)\,ds
\right)^{\frac1{p-1}-1}
f^*(l).
\end{align*}
\noindent
Since $f^*(l)\ge0$, we deduce that
\begin{align*}
&
F_1''(l)S_1(l)
-\frac1{p-1}F_1'(l)S_1'(l)
\\
&\ge
l^{\frac1\lambda-1}
S_1(l)^{-\frac{p}{p-1}}
\left(
\int_0^l f^*(s)\,ds
\right)^{\frac1{p-1}}
\Biggl[
\frac1\lambda S_1(l)
-\frac{p+1}{p-1}lS_1'(l)
\Biggr].
\end{align*}
Returning to \eqref{eq:derivative-FS}, we proceed as in the previous argument. In view of
\[
\frac{\cos r}{\sin^n r}
\int_0^r \sin^{n-1}s\,ds
\le \frac1n,
\]
it follows that
\[
\frac1\lambda
-\frac{p+1}{p-1}(n-1)
\frac{\cos r\int_0^r \sin^{n-1}s\,ds}
{\sin^n r}
\ge
\frac1\lambda
-\frac{(p+1)(n-1)}{n(p-1)}.
\]
\noindent
Therefore,
\[
\frac1\lambda
-\frac{p+1}{p-1}(n-1)
\frac{\cos r\int_0^r \sin^{n-1}s\,ds}
{\sin^n r}
\ge0
\]
whenever
\[
0<\lambda\le
\frac{n(p-1)}{(p+1)(n-1)}.
\]
\noindent
Consequently, the function
\[
l\mapsto
F_1(l)S_1(l)^{-\frac{p}{p-1}}
\]
is nondecreasing on its domain.
\par
For the case $n=2$, we investigate the sign of
\[
F_1'(l)S_1(l)
-\frac{p}{p-1}F_1(l)S_1'(l).
\]
Equivalently, it suffices to study the sign of
\[
F_1'(l)S_1(l)^2
-\frac{p}{p-1}F_1(l)S_1(l)S_1'(l).
\]

We differentiate this quantity with respect to $l$, obtaining
\begin{align*}
&\left(F_1'(l)S_1(l)^2
-\frac{p}{p-1}F_1(l)S_1(l)S_1'(l)\right)' \\
&=
F_1''(l)S_1(l)^2
+2F_1'(l)S_1(l)S_1'(l)
-\frac{p}{p-1}F_1'(l)S_1(l)S_1'(l)
-\frac{p}{p-1}F_1(l)\big(S_1(l)S_1'(l)\big)' \\
&=
F_1''(l)S_1(l)^2
+\frac{p-2}{p-1}F_1'(l)S_1(l)S_1'(l)
-\frac{p}{p-1}F_1(l)\big(S_1(l)S_1'(l)\big)'.
\end{align*}

In dimension $n=2$, we can directly verify that
\[
\big(S_1(l)S_1'(l)\big)' = A(\cos r)' \le 0,
\]
where $A>0$ is a constant.
It follows that
\begin{align*}
\left(F_1'(l)S_1(l)^2
-\frac{p}{p-1}F_1(l)S_1(l)S_1'(l)\right)'
\ge
F_1''(l)S_1(l)^2
+\frac{p-2}{p-1}F_1'(l)S_1(l)S_1'(l).
\end{align*}

We proceed as in the previous case and obtain
\begin{align*}
F_1''(l)S_1(l)^2
+\frac{p-2}{p-1}F_1'(l)S_1(l)S_1'(l)
\ge
S_1(l)^{-\frac{p}{p-1}} l^{\frac{1}{\lambda}-1}
\left[
\frac{1}{\lambda}S_1(l)
-\frac{2}{p-1}lS_1'(l)
\right].
\end{align*}

We now return to the previous argument. Consequently, we conclude that
\[
F_1(l)S_1(l)^{\frac{p}{p-1}}
\ \text{is nondecreasing for } 0<\lambda\le p-1.
\]
\end{proof}
\begin{remark}
One may expect that more refined estimates could lead to improved inequalities.
\end{remark}
\bibliographystyle{plain}
\bibliography{graduated.bib}
\end{document}